\providecommand{\U}[1]{\protect\rule{.1in}{.1in}}
\newtheorem{theorem}{Theorem}[section]
\newtheorem{corollary}[theorem]{Corollary}
\newtheorem{definitions}[theorem]{Definitions}
\newtheorem{lemma}[theorem]{Lemma}
\newtheorem{proposition}[theorem]{Proposition}
\numberwithin{equation}{section}
\newenvironment{proof}[1][Proof]{\noindent\textbf{#1.} }{\ \rule{0.5em}{0.5em}}
\begin{document}

\title{Asymptotics for Sobolev extremals: the hyperdiffusive case}
\author{Grey Ercole\\{\small Universidade Federal de Minas Gerais} \\{\small Belo Horizonte, MG, 30.123-970, Brazil}\\{\small grey@mat.ufmg.br}}
\maketitle

\begin{abstract}
\noindent Let $\Omega$ be a bounded, smooth domain of $\mathbb{R}^{N},$
$N\geq2.$ For $p>N$ and $1\leq q(p)<\infty$ set
\[
\lambda_{p,q(p)}:=\inf\left\{  \int_{\Omega}\left\vert \nabla u\right\vert
^{p}\mathrm{d}x:u\in W_{0}^{1,p}(\Omega)\text{ \ and \ }\int_{\Omega
}\left\vert u\right\vert ^{q(p)}\mathrm{d}x=1\right\}
\]
and denote by $u_{p,q(p)}$ a positive extremal function corresponding to
$\lambda_{p,q(p)}$. We show that if $\lim\limits_{p\rightarrow\infty
}q(p)=\infty$, then $\lim\limits_{p\rightarrow\infty}\lambda_{p,q(p)}%
^{1/p}=\left\Vert d_{\Omega}\right\Vert _{\infty}^{-1}$, where $d_{\Omega}$
denotes the distance function to the boundary of $\Omega.$ Moreover, in the
hyperdiffusive case: $\lim\limits_{p\rightarrow\infty}\frac{q(p)}{p}=\infty,$
we prove that each sequence $u_{p_{n},q(p_{n})},$ with $p_{n}\rightarrow
\infty,$ admits a subsequence converging uniformly in $\overline{\Omega}$ to a
viscosity solution to the problem%
\[
\left\{
\begin{array}
[c]{lll}%
-\Delta_{\infty}u=0 & \text{in} & \Omega\setminus M\\
u=0 & \text{on} & \partial\Omega\\
u=1 & \text{in} & M,
\end{array}
\right.
\]
where $M$ is a closed subset of the set of all maximum points of $d_{\Omega}.$

\end{abstract}

{\small \noindent\textbf{2020 MSC:} 35B40, 35J92, 35J94.}

{\small \noindent\textbf{Keywords:} Distance function, infinity Laplacian,
Sobolev constants.}

\section{Introduction}

Let $\Omega$ be a smooth, bounded domain of $\mathbb{R}^{N},$ $N\geq2.$ For
$p>N$ and $1\leq q<\infty$ let
\begin{equation}
\lambda_{p,q}:=\inf\left\{  \left\Vert \nabla u\right\Vert _{p}^{p}:u\in
W_{0}^{1,p}(\Omega)\text{ \ and \ }\left\Vert u\right\Vert _{q}=1\right\}
\label{lpq}%
\end{equation}
be the best Sobolev constant of the embedding $W_{0}^{1,p}(\Omega
)\hookrightarrow L^{q}(\Omega).$ Here and in what follows $\left\Vert
\cdot\right\Vert _{r}$ denotes the standard norm of the Lebesgue space
$L^{r}(\Omega),$ $1\leq r\leq\infty,$ and $\left\Vert \nabla\cdot\right\Vert
_{p}$ denotes the standard norm of the Sobolev space $W_{0}^{1,p}(\Omega).$

As it is well known, $\lambda_{p,q}$ is in fact a minimum: there exists a
function $u_{p,q}\in W_{0}^{1,p}(\Omega)$ such that
\[
\left\Vert u_{p,q}\right\Vert _{q}=1\text{ \ and \ }\lambda_{p,q}=\left\Vert
\nabla u_{p,q}\right\Vert _{p}^{p}.
\]
Moreover, $u_{p,q}\in L^{\infty}(\Omega)$ and does not chance sign in
$\Omega.$ Throughout this paper, $u_{p,q}$ will denote a positive minimizer in
(\ref{lpq}). Such a function will be referred as a \textit{Sobolev extremal
function corresponding to }$\lambda_{p,q}$. As a consequence of its minimizing
property $u_{p,q}$ is a weak solution to the Dirichlet problem%
\begin{equation}
\left\{
\begin{array}
[c]{lll}%
-\Delta_{p}u=\lambda_{p,q}\left\vert u\right\vert ^{q-2}u & \text{\textup{in}}
& \Omega\\
u=0 & \text{\textup{on}} & \partial\Omega,
\end{array}
\right.  \label{Dir}%
\end{equation}
where $\Delta_{p}u:=\operatorname{div}\left(  \left\vert \nabla u\right\vert
^{p-2}\nabla u\right)  $ is the $p$-Laplacian operator.

The Sobolev extremal function $u_{p,q}$ is the only positive minimizer in
(\ref{lpq}) if $1\leq q\leq p,$ but in general this uniqueness property does
not hold if $p<q<\infty.$ For this matter we refer to the paper \cite{BLind}
by Brasco and Lindgren and references therein.

When $q\not =p$ the Dirichlet problem%
\begin{equation}
\left\{
\begin{array}
[c]{lll}%
-\Delta_{p}v=\lambda\left\vert v\right\vert ^{q-2}v & \text{\textup{in}} &
\Omega\\
v=0 & \text{\textup{on}} & \partial\Omega,
\end{array}
\right.  \label{dirq1}%
\end{equation}
admits at least one positive weak solution for each $\lambda>0.$

If $1\leq q<p$ the problem (\ref{dirq1}) has a unique positive solution
$v_{p,q}$ which is given by the expression
\begin{equation}
v_{p,q}=\left(  \frac{\lambda}{\lambda_{p,q}}\right)  ^{\frac{1}{p-q}}u_{p,q}.
\label{exp0}%
\end{equation}
Furthermore, $v_{p,q}$ minimizes globally (i.e. on $W_{0}^{1,p}(\Omega)$) the
\textit{energy functional}
\[
E_{\lambda}(v):=\frac{1}{p}\left\Vert \nabla v\right\Vert _{p}^{p}%
-\frac{\lambda}{q}\left\Vert v\right\Vert _{q}^{q}.
\]

If $q>p$ the problem (\ref{dirq1}) might, in principle, have multiple positive
weak solutions for each $\lambda>0.$ Among such solutions, those in which the
energy functional reaches the lowest value are known as \textit{least energy
solutions.} They are characterized as the positive minimizers of the energy
functional on the Nehari manifold%
\[
N_{\lambda}:=\left\{  v\in W_{0}^{1,p}(\Omega)\setminus\{0\}:\left\Vert \nabla
v\right\Vert _{p}^{p}=\lambda\left\Vert v\right\Vert _{q}^{q}\right\}  .
\]
An interesting fact is that least energy solutions can also be written in the
form (\ref{exp0}) for some Sobolev extremal function $u_{p,q}:$
\begin{equation}
v_{p,q}=\left(  \frac{\lambda_{p,q}}{\lambda}\right)  ^{\frac{1}{q-p}}u_{p,q}.
\label{exp}%
\end{equation}

When $q=p$ (\ref{dirq1}) becomes the homogeneous Dirichlet problem%
\begin{equation}
\left\{
\begin{array}
[c]{lll}%
-\Delta_{p}v=\lambda\left\vert v\right\vert ^{p-2}v & \text{\textup{in}} &
\Omega\\
v=0 & \text{\textup{on}} & \partial\Omega.
\end{array}
\right.  \label{dirp}%
\end{equation}
which is known as the eigenvalue problem of the Dirichlet $p$-Laplacian. In
this context, $\lambda_{p,p}$ is known as the first eigenvalue since it is the
least value of $\lambda$ for which (\ref{dirp}) has a nontrivial weak
solution. Moreover, (\ref{dirp}) has a positive weak solution $v_{p,p}$ if and
only if $\lambda=\lambda_{p,p}$ and $v_{p,p}=ku_{p,p},$ for some positive
constant $k.$

In \cite{JLM}, Juutinen, Lindqvist and Manfredi studied the asymptotic
behavior, as $p\rightarrow\infty,$ of the pair $(\lambda_{p,p},u_{p,p}).$ They
first deduced that the infimum
\begin{equation}
\Lambda_{\infty}:=\inf\left\{  \left\Vert \nabla u\right\Vert _{\infty}:u\in
C_{0}(\overline{\Omega})\cap W^{1,\infty}(\Omega)\text{ \ and \ }\left\Vert
u\right\Vert _{\infty}=1\right\}  \label{Linf}%
\end{equation}
is achieved by $d_{\Omega}/\left\Vert d_{\Omega}\right\Vert _{\infty},$ where
$d_{\Omega}$ denotes the distance function to the boundary of $\Omega$:%
\[
d_{\Omega}(x):=\inf_{y\in\partial\Omega}\left\vert x-y\right\vert ,\quad
x\in\overline{\Omega}.
\]
Thus,
\[
\Lambda_{\infty}=\left\Vert \nabla(d_{\Omega}/\left\Vert d_{\Omega}\right\Vert
_{\infty})\right\Vert _{\infty}=\left\Vert \nabla d_{\Omega}\right\Vert
_{\infty}/\left\Vert d_{\Omega}\right\Vert _{\infty}=\left\Vert d_{\Omega
}\right\Vert _{\infty}^{-1}.
\]
(We recall that $d_{\Omega}\in C_{0}(\overline{\Omega})\cap W^{1,\infty
}(\Omega)$ and $\left\vert \nabla d_{\Omega}\right\vert =1$ a.e. in $\Omega.$
Moreover, $\left\Vert d_{\Omega}\right\Vert _{\infty}$ is the inradius of
$\Omega,$ i.e. the radius of the greatest ball inscribed in $\Omega.$)

Still in \cite{JLM}, Juutinen, Lindqvist and Manfredi, showed that
\[
\lim_{p\rightarrow\infty}(\lambda_{p,p})^{1/p}=\Lambda_{\infty}%
\]
and proved that any sequence $u_{p_{n},p_{n}},$ with $p_{n}\rightarrow\infty,$
admits a subsequence converging uniformly in $\overline{\Omega}$ to a function
$u_{\infty}\in C_{0}(\overline{\Omega})\cap W^{1,\infty}(\Omega)$ which is a
positive minimizer in (\ref{Linf}), that is,
\[
u_{\infty}>0\text{ in }\Omega,\text{ \ }\left\Vert u_{\infty}\right\Vert
_{\infty}=1,\text{ \ and \ }\Lambda_{\infty}=\left\Vert \nabla u_{\infty
}\right\Vert _{\infty}.
\]
In addition, they proved that $u_{\infty}$ is a viscosity solution to the
Dirichlet problem%
\begin{equation}
\left\{
\begin{array}
[c]{lll}%
\min\left\{  \left\vert \nabla u\right\vert -\Lambda_{\infty}u,-\Delta
_{\infty}u\right\}  =0 & \mathrm{in} & \Omega\\
u=0 & \mathrm{on} & \partial\Omega,
\end{array}
\right.  \label{infeig}%
\end{equation}
where%
\[
\Delta_{\infty}u:=%
{\displaystyle\sum\limits_{i,j=1}^{N}}
u_{x_{i}}u_{x_{j}}u_{x_{i}x_{j}}%
\]
denotes the infinity Laplacian.

These results were independently obtained by Fukagai, Ito and Narukawa in
\cite{Fuka}, where the asymptotic behavior (as $p\rightarrow\infty$) of the
higher eigenvalues of the Dirichlet $p$-Laplacian was also studied.

The value $\Lambda_{\infty}=\left\Vert d_{\Omega}\right\Vert _{\infty}^{-1}$
has been referred in the literature as the first eigenvalue of the infinity
Laplacian and the Dirichlet problem (taken in the viscosity sense)
\[
\left\{
\begin{array}
[c]{lll}%
\min\left\{  \left\vert \nabla u\right\vert -\Lambda u,-\Delta_{\infty
}u\right\}  =0 & \mathrm{in} & \Omega\\
u=0 & \mathrm{on} & \partial\Omega
\end{array}
\right.
\]
has been referred as the eigenvalue problem for the infinity Laplacian. Thus,
the limit function $u_{\infty}$ obtained in \cite{Fuka} and \cite{JLM} is a
first eigenfunction of the infinity Laplacian.

Now, let us consider the Dirichlet problem
\begin{equation}
\left\{
\begin{array}
[c]{lll}%
-\Delta_{p}v=\mu_{p}\left\vert v\right\vert ^{q(p)-2}v & \mathrm{in} &
\Omega\\
v=0 & \mathrm{on} & \partial\Omega,
\end{array}
\right.  \label{dirq}%
\end{equation}
where $\mu_{p}>0$ is such that
\begin{equation}
\Lambda:=\lim_{p\rightarrow\infty}(\mu_{p})^{1/p}\in(0,\infty). \label{Lamb}%
\end{equation}

We will denote by $v_{p,q(p)}$ either the only positive weak solution to
(\ref{dirq}), if $1\leq q(p)<p,$ or any least energy solution to the same
problem, if $q(p)>p.$

Under (\ref{Lamb}) the asymptotic behavior of $v_{p,q(p)}$ was studied by
Charro and Peral in \cite{ChaPe} ($q(p)<p$) and Charro and Parini in
\cite{ChaPa} ($q(p)>p$). Each of these papers considered a different range for
the limit
\begin{equation}
Q:=\lim_{p\rightarrow\infty}\frac{q(p)}{p}. \label{Q}%
\end{equation}
In \cite{ChaPe}, Charro and Peral studied the \textit{subdiffusive} case:
$Q\in(0,1)$ whereas in \cite{ChaPa}, Charro and Parini studied the
\textit{superdiffusive} case: $Q\in(1,\infty)$.

In both works it is proved that any sequence $v_{p_{n},q(p_{n})},$ with
$p_{n}\rightarrow\infty,$ admits a subsequence converging uniformly to a
viscosity solution $v_{\infty}\in C_{0}(\overline{\Omega})\cap W^{1,\infty
}(\Omega)$ to the problem
\begin{equation}
\left\{
\begin{array}
[c]{lll}%
\min\left\{  \left\vert \nabla v\right\vert -\Lambda v^{Q},-\Delta_{\infty
}v\right\}  =0 & \mathrm{in} & \Omega\\
v=0 & \mathrm{on} & \partial\Omega.
\end{array}
\right.  \label{eQ}%
\end{equation}

Taking into account that, owing to (\ref{exp}),
\begin{equation}
v_{p,q(p)}=\left(  \frac{\lambda_{p,q(p)}}{\mu_{p}}\right)  ^{\frac{1}%
{q(p)-p}}u_{p,q(p)}, \label{scale}%
\end{equation}
the analysis of the behavior of $v_{p,q(p)}$ under (\ref{Lamb}) can be
restricted to the behavior of the pair $\left(  \lambda_{p,q(p)}%
,u_{p,q(p)}\right)  .$

In our first result, stated in the sequence, we simply assume that $q(p)$ goes
to infinity with $p.$ We do not impose any conditions on the sign of the
difference $q(p)-p$ or on the limit $Q$ defined in (\ref{Q}).

\begin{theorem}
\label{main1}If
\begin{equation}
\lim_{p\rightarrow\infty}q(p)=\infty, \label{qinf}%
\end{equation}
then%
\begin{equation}
\lim_{p\rightarrow\infty}\lambda_{p,q(p)}^{1/p}=\Lambda_{\infty}, \label{lim}%
\end{equation}
and
\begin{equation}
\lim_{p\rightarrow\infty}\left\Vert u_{p,q(p)}\right\Vert _{\infty}=1.
\label{lim1}%
\end{equation}

Moreover, each sequence $u_{p_{n},q(p_{n})},$ with $p_{n}\rightarrow\infty,$
admits a subsequence that converges uniformly to a function $u_{\infty}\in
C_{0}(\overline{\Omega})\cap W^{1,\infty}(\Omega)$ enjoying the following properties:

\begin{enumerate}
\item $0\leq u_{\infty}(x)\leq\Lambda_{\infty}d_{\Omega}(x)$ for all $x\in$
$\overline{\Omega}.$

\item $\left\Vert u_{\infty}\right\Vert _{\infty}=1$ \ and \ $\Lambda_{\infty
}=\left\Vert \nabla u_{\infty}\right\Vert _{\infty}.$

\item $M:=\left\{  x\in\Omega:u_{\infty}(x)=1\right\}  \subseteq M_{\Omega
}:=\left\{  x\in\Omega:d_{\Omega}(x)=\left\Vert d_{\Omega}\right\Vert
_{\infty}\right\}  .$

\item $u_{\infty}$ infinity superharmonic in $\Omega$ and (consequently)
positive in $\Omega.$
\end{enumerate}
\end{theorem}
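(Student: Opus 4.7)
The plan is to sandwich $\lambda_{p,q(p)}^{1/p}$ between matching upper and lower bounds, derive (\ref{lim1}) as a by-product, and then extract a uniform subsequential limit via compactness and verify its four properties.

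For the upper bound, I would test the infimum defining $\lambda_{p,q(p)}$ with $\varphi = d_\Omega/\|d_\Omega\|_{q(p)}$; since $d_\Omega\in W_0^{1,\infty}(\Omega)$ with $|\nabla d_\Omega|=1$ a.e., this yields $\lambda_{p,q(p)}\leq |\Omega|/\|d_\Omega\|_{q(p)}^{p}$, so sending $p,q(p)\to\infty$ gives $\limsup\lambda_{p,q(p)}^{1/p}\leq \|d_\Omega\|_\infty^{-1}=\Lambda_\infty$. For the matching lower bound I would combine the Hölder consequence $1=\|u_{p,q(p)}\|_{q(p)}\leq |\Omega|^{1/q(p)}\|u_{p,q(p)}\|_\infty$ with a sharp Morrey-type inequality for $W_0^{1,p}(\Omega)$, $p>N$, of the form $\|u\|_\infty\leq K_p^{1/p}\|\nabla u\|_p$ with $K_p^{1/p}\to \|d_\Omega\|_\infty$; this delivers $\lambda_{p,q(p)}^{1/p}=\|\nabla u_{p,q(p)}\|_p\geq |\Omega|^{-1/q(p)}/K_p^{1/p}\to \Lambda_\infty$, proving (\ref{lim}). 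Reading both chains together pins $\|u_{p,q(p)}\|_\infty$ between $|\Omega|^{-1/q(p)}$ and $K_p^{1/p}\lambda_{p,q(p)}^{1/p}$, both of which tend to $1$, establishing (\ref{lim1}).

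Given any $p_n\to\infty$, set $u_n:=u_{p_n,q(p_n)}$. For each fixed $s>N$, Hölder applied to $\nabla u_n$ gives $\|\nabla u_n\|_s\leq |\Omega|^{1/s-1/p_n}\lambda_{p_n,q(p_n)}^{1/p_n}$, uniformly bounded as $n\to\infty$. The Morrey embedding $W^{1,s}\hookrightarrow C^{0,1-N/s}$ therefore yields equi-Hölder continuity on $\overline\Omega$, and Arzelà--Ascoli extracts a subsequence converging uniformly on $\overline\Omega$; a diagonal procedure over $s_k\to\infty$ upgrades this to weak convergence in every $W^{1,s}(\Omega)$ to a limit $u_\infty\in C_0(\overline\Omega)\cap W^{1,\infty}(\Omega)$.

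From the weak convergence and the same Hölder trick, $\|\nabla u_\infty\|_s\leq |\Omega|^{1/s}\Lambda_\infty$ for every $s<\infty$; sending $s\to\infty$ yields $\|\nabla u_\infty\|_\infty\leq \Lambda_\infty$. Uniform convergence forces $\|u_\infty\|_\infty=1$, so $u_\infty$ is admissible in (\ref{Linf}) and the reverse inequality $\|\nabla u_\infty\|_\infty\geq \Lambda_\infty$ is immediate, proving property 2. Property 1 then follows by integrating $\nabla u_\infty$ along the segment from any $x\in\overline\Omega$ to its nearest boundary point, and property 3 is the pointwise reading of property 1 where $u_\infty=1$. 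For property 4, each $u_n$ is a viscosity supersolution of $-\Delta_p u=0$ since the right-hand side of (\ref{Dir}) is nonnegative; the standard viscosity passage-to-the-limit device---touch $u_\infty$ from below at $x_0\in\Omega$ by a $C^2$ test $\varphi$, localize touching points $x_n\to x_0$ for $u_n$, divide the pointwise inequality $-\Delta_{p_n}\varphi(x_n)\geq 0$ by $(p_n-2)|\nabla\varphi(x_n)|^{p_n-4}$ when $\nabla\varphi(x_0)\neq 0$ (handling the degenerate case separately), and let $n\to\infty$---produces $-\Delta_\infty\varphi(x_0)\geq 0$. Hence $u_\infty$ is $\infty$-superharmonic, and the strong minimum principle combined with $u_\infty\not\equiv 0$ forces $u_\infty>0$ in $\Omega$. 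The crux technical point is securing the sharp asymptotic $K_p^{1/p}\to \|d_\Omega\|_\infty$ for the Morrey constant; without it the upper and lower bounds on $\lambda_{p,q(p)}^{1/p}$ would fail to match.
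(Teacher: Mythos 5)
Your proposal is correct and follows essentially the same route as the paper: the upper bound via the test function $d_{\Omega}/\left\Vert d_{\Omega}\right\Vert _{q(p)}$, the lower bound via H\"{o}lder plus the sharp $L^{\infty}$-embedding constant (your $K_{p}^{1/p}$ is exactly $\Lambda_{p}^{-1}$ from Lemma \ref{F2}, whose convergence to $\Lambda_{\infty}$ the paper quotes from \cite{EP16}), the same squeeze for (\ref{lim1}), the same compactness/diagonal argument (packaged in the paper as Lemma \ref{F1}), and the same viscosity passage to the limit plus the strong minimum principle for item 4. The "crux" you correctly flag is precisely the content of the cited asymptotic $\Lambda_{p}\rightarrow\Lambda_{\infty}$.
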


Note from item 2 above that the function $u_{\infty}$ is also a solution to
the minimization problem given by (\ref{Linf}).

As far as we know, the limit (\ref{lim}) has not yet been observed in the
literature (except in the case $q(p)=p$ as described above). We remark from
(\ref{scale}) that if $Q\in(0,1)\cup(1,\infty]$ and $\mu_{p}$ satisfies
(\ref{Lamb}), then
\begin{equation}
\lim_{p\rightarrow\infty}v_{p,q(p)}=\lim_{p\rightarrow\infty}\left(
\frac{\lambda_{p,q(p)}}{\mu_{p}}\right)  ^{\frac{1}{q(p)-p}}u_{p,q(p)}=\left(
\frac{\Lambda_{\infty}}{\Lambda}\right)  ^{\frac{1}{Q-1}}\lim_{p\rightarrow
\infty}u_{p,q(p)} \label{exp1}%
\end{equation}
since the limit (\ref{lim}) yields%
\[
\lim_{p\rightarrow\infty}\left(  \frac{\lambda_{p,q(p)}}{\mu_{p}}\right)
^{\frac{1}{q(p)-p}}=\lim_{p\rightarrow\infty}\left(  \frac{\lambda_{p,q(p)}%
}{\mu_{p}}\right)  ^{\frac{1}{p}}{}^{\frac{1}{\frac{q(p)}{p}-1}}=\left(
\frac{\Lambda_{\infty}}{\Lambda}\right)  ^{\frac{1}{Q-1}}.
\]

Corollary \ref{cor1} below can be proved by adapting the proof of Theorem 1.21
in \cite{JLM} to the function $u_{\infty}$ obtained in Theorem \ref{main1}.

\begin{corollary}
\label{cor1}If
\[
0<Q<\infty,
\]
then each sequence $u_{p_{n},q(p_{n})},$ with $p_{n}\rightarrow\infty,$ admits
a subsequence that converges uniformly to viscosity solution to the Dirichlet
problem%
\[
\left\{
\begin{array}
[c]{lll}%
\min\left\{  \left\vert \nabla v\right\vert -\Lambda_{\infty}v^{Q}%
,-\Delta_{\infty}v\right\}  =0 & \mathrm{in} & \Omega\\
v=0 & \mathrm{on} & \partial\Omega.
\end{array}
\right.
\]

\end{corollary}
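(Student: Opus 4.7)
The plan is to pass to the viscosity-sense limit $p_n\to\infty$ in the Dirichlet problem satisfied by $u_n:=u_{p_n,q(p_n)}$, namely $-\Delta_{p_n}u_n = \lambda_n u_n^{q(p_n)-1}$ with $\lambda_n:=\lambda_{p_n,q(p_n)}$, using that weak solutions of such a $p$-Laplacian equation are viscosity solutions. Let $u_\infty$ denote the uniform limit of the subsequence provided by Theorem \ref{main1}. I would verify separately the viscosity supersolution and subsolution properties at an arbitrary interior point $x_0\in\Omega$, recalling that $u_\infty(x_0)>0$ by item 4.

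For the supersolution half, let $\varphi\in C^{2}$ touch $u_\infty$ strictly from below at $x_0$. The inequality $-\Delta_\infty\varphi(x_0)\ge 0$ is already granted by item 4 of Theorem \ref{main1}. For the gradient bound $|\nabla\varphi(x_0)|\ge \Lambda_\infty u_\infty(x_0)^{Q}$, I would use uniform convergence to extract $x_n\to x_0$ at which $u_n-\varphi$ has a local minimum, and apply the viscosity inequality $-\Delta_{p_n}\varphi(x_n)\ge\lambda_n u_n(x_n)^{q(p_n)-1}$. After expanding $-\Delta_{p_n}\varphi=-|\nabla\varphi|^{p_n-4}\bigl(|\nabla\varphi|^{2}\Delta\varphi+(p_n-2)\Delta_\infty\varphi\bigr)$, dividing by $|\nabla\varphi(x_n)|^{p_n-4}$ (assuming for the moment $\nabla\varphi(x_0)\ne 0$), and taking $p_n$-th roots, the left-hand side (a quantity at most linear in $p_n$) has $p_n$-th root tending to $1$, while the right-hand side tends to $\Lambda_\infty u_\infty(x_0)^{Q}/|\nabla\varphi(x_0)|$ by virtue of $\lambda_n^{1/p_n}\to\Lambda_\infty$ (Theorem \ref{main1}), $(q(p_n)-1)/p_n\to Q$, and $u_n(x_n)\to u_\infty(x_0)>0$. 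This yields the desired gradient bound.

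For the subsolution half, let $\varphi\in C^{2}$ touch $u_\infty$ strictly from above at $x_0$. If $-\Delta_\infty\varphi(x_0)\le 0$, the min is non-positive and nothing is to prove. Otherwise $-\Delta_\infty\varphi(x_0)>0$, which forces $\nabla\varphi(x_0)\ne 0$ and makes both sides of the reversed viscosity inequality $-\Delta_{p_n}\varphi(x_n)\le\lambda_n u_n(x_n)^{q(p_n)-1}$ (at $x_n\to x_0$ where $u_n-\varphi$ has a local maximum) ultimately positive; the symmetric $p_n$-th root passage then gives $|\nabla\varphi(x_0)|\le\Lambda_\infty u_\infty(x_0)^{Q}$, so again the min is $\le 0$.

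The main obstacle is handling $\nabla\varphi(x_0)=0$ in the supersolution step, where dividing by $|\nabla\varphi|^{p_n-4}$ is invalid. Here I would exploit that $u_\infty(x_0)>0$: in the unscaled inequality $-\Delta_{p_n}\varphi(x_n)\ge\lambda_n u_n(x_n)^{q(p_n)-1}$, the right-hand side has $p_n$-th root converging to $\Lambda_\infty u_\infty(x_0)^{Q}>0$, whereas the factor $|\nabla\varphi(x_n)|^{p_n-4}$ drives the $p_n$-th root of the left-hand side to $0$ (if the left-hand side is non-negative; otherwise the inequality is already contradictory). This rules out admissible $C^{2}$ test functions touching $u_\infty$ from below at $x_0$ with vanishing gradient, and thus completes the verification that $u_\infty$ is a viscosity solution of the limit problem.
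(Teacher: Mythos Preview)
Your proposal is correct and coincides with what the paper indicates: the paper does not spell out a proof of Corollary~\ref{cor1} but simply states that it ``can be proved by adapting the proof of Theorem~1.21 in \cite{JLM} to the function $u_{\infty}$ obtained in Theorem~\ref{main1}'', and your argument is precisely that adaptation. The splitting into the super- and subsolution halves, the use of item~4 of Theorem~\ref{main1} for $-\Delta_\infty u_\infty\ge 0$, the $p_n$-th root passage relying on $\lambda_n^{1/p_n}\to\Lambda_\infty$ and $(q(p_n)-1)/p_n\to Q$, and the treatment of the case $\nabla\varphi(x_0)=0$ via the contradiction with $u_\infty(x_0)>0$ are exactly the ingredients of the Juutinen--Lindqvist--Manfredi scheme transplanted to the present setting.
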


Corollary \ref{cor1} extends to $q(p)\not =p$ the convergence result proved
for $q(p)=p$ by Juutinen, Lindqvist and Manfredi in \cite{JLM}. Indeed, it
shows that if $q(p)\not =p$ and $Q=1,$ then any limit function (as
$p\rightarrow\infty$) of the family $\left(  u_{p,q(p)}\right)  $ is a first
eigenfunction of the infinity Laplacian.

We also observe, as consequence of (\ref{exp1}), that Corollary \ref{cor1}
recovers the convergence results obtained by Charro-Peral in \cite{ChaPe} and
Charro-Parini in \cite{ChaPa}.

Our main purpose in this paper is to study the limiting behavior, as
$p\rightarrow\infty,$ of the least energy solutions to the Dirichlet problem
(\ref{dirq}) in the case not yet treated in the literature: $Q=\infty,$ which
we call \textit{hyperdiffusive} case. We can admit faster growth for $\mu_{p}$
by assuming that
\begin{equation}
\Theta:=\lim_{p\rightarrow\infty}(\mu_{p})^{1/q(p)}\in(0,\infty),
\label{Theta}%
\end{equation}
instead of (\ref{Lamb}). Hence, (\ref{scale}) and (\ref{lim}) yield
\begin{equation}
\lim_{p\rightarrow\infty}v_{p,q(p)}=\Theta^{-1}\lim_{p\rightarrow\infty
}u_{p,q(p)}. \label{Theta1}%
\end{equation}
(Note that $\Theta=1$ under (\ref{Lamb}).)

Therefore, our main result, stated in the sequel, handles the hyperdiffusive
case for $u_{p,q(p)}.$

\begin{theorem}
\label{main2}If
\begin{equation}
\lim_{p\rightarrow\infty}\frac{q(p)}{p}=\infty, \label{hyper}%
\end{equation}
the function $u_{\infty}\in C_{0}(\overline{\Omega})\cap W^{1,\infty}(\Omega)$
obtained in Theorem \ref{main1} is a viscosity solution to the problem%
\begin{equation}
\left\{
\begin{array}
[c]{lll}%
-\Delta_{\infty}u=0 & \text{in} & \Omega\setminus M\\
u=0 & \text{on} & \partial\Omega\\
u=1 & \text{in} & M.
\end{array}
\right.  \label{dirM}%
\end{equation}

Moreover,
\begin{equation}
u_{\infty}=\frac{d_{\Omega}}{\left\Vert d_{\Omega}\right\Vert _{\infty}}
\label{eq}%
\end{equation}
if and only if
\begin{equation}
M=M_{\Omega}=\Sigma_{\Omega}, \label{MMS}%
\end{equation}
where%
\[
\Sigma_{\Omega}:=\left\{  x\in\Omega:d_{\Omega}\text{ is not differentiable at
}x\right\}  .
\]

\end{theorem}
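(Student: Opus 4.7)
The plan is to split the proof into two parts: first showing that $u_\infty$ solves (\ref{dirM}) in the viscosity sense, and second proving the equivalence (\ref{eq})$\Leftrightarrow$(\ref{MMS}). From Theorem~\ref{main1} we already have $u_\infty\in C_0(\overline{\Omega})\cap W^{1,\infty}(\Omega)$, infinity-superharmonic in $\Omega$ (hence a viscosity supersolution of $-\Delta_\infty u=0$ on $\Omega\setminus M$), with $u_\infty=0$ on $\partial\Omega$ and $u_\infty=1$ on $M$ by the very definition of $M$. Therefore the only new PDE statement to prove is that $u_\infty$ is a viscosity subsolution of $-\Delta_\infty u=0$ on the open set $\Omega\setminus M$.

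For this subsolution property, fix $x_0\in\Omega\setminus M$, so $a:=u_\infty(x_0)<1$, and let $\phi\in C^2$ touch $u_\infty$ from above at $x_0$. Choose a closed ball $\overline{B}\subset\Omega$ around $x_0$ and $c\in(a,1)$ with $u_\infty<c$ on $\overline{B}$. Along the convergent subsequence $u_n:=u_{p_n,q(p_n)}$ of Theorem~\ref{main1}, uniform convergence gives $u_n<c$ on $\overline{B}$ for large $n$, and after a standard perturbation $u_n-\phi$ attains a local maximum at points $x_n\to x_0$ with $u_n(x_n)\to a$. Since $u_n\in C^{1,\alpha}$ is (by standard regularity for $p>N$) a viscosity subsolution of $-\Delta_p u=\lambda_{p_n,q(p_n)}u^{q(p_n)-1}$, expanding $\Delta_p\phi$ yields at $x_n$:
\[
-|\nabla\phi|^{p_n-2}\Delta\phi-(p_n-2)|\nabla\phi|^{p_n-4}\Delta_\infty\phi\leq\lambda_{p_n,q(p_n)}u_n(x_n)^{q(p_n)-1}.
\]
If $|\nabla\phi(x_0)|\neq 0$, divide by $(p_n-2)|\nabla\phi(x_n)|^{p_n-4}$; the first left-hand term vanishes as $n\to\infty$, while the right-hand side is bounded by $R_n:=(p_n-2)^{-1}\lambda_{p_n,q(p_n)}c^{q(p_n)-1}|\nabla\phi(x_0)|^{4-p_n}$. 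Using $\lambda_{p_n,q(p_n)}^{1/p_n}\to\Lambda_\infty$ from Theorem~\ref{main1}, together with $c<1$ and the hyperdiffusive hypothesis $q(p_n)/p_n\to\infty$, one computes $R_n^{1/p_n}\to\Lambda_\infty|\nabla\phi(x_0)|^{-1}\cdot\lim c^{q(p_n)/p_n}=0$, and hence $R_n\to 0$. This forces $-\Delta_\infty\phi(x_0)\leq 0$. The case $\nabla\phi(x_0)=0$ is trivial, since then $\Delta_\infty\phi(x_0)=0$ by the very definition of $\Delta_\infty$.

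For the characterization, suppose first (\ref{MMS}) holds. Then $w:=d_\Omega/\|d_\Omega\|_\infty$ is $C^2$ on $\Omega\setminus\Sigma_\Omega$, classically $\infty$-harmonic there, with $w=0$ on $\partial\Omega$ and $w=1$ on $M_\Omega=M$; hence $w$ solves (\ref{dirM}). Since $u_\infty$ also solves (\ref{dirM}), Jensen's uniqueness theorem for the $\infty$-Laplacian Dirichlet problem on $\Omega\setminus M$ with continuous boundary data yields $u_\infty=w$, which gives (\ref{eq}). Conversely, if $u_\infty=d_\Omega/\|d_\Omega\|_\infty$, then immediately $M=\{u_\infty=1\}=M_\Omega$, and $d_\Omega$ itself must be $\infty$-harmonic in the viscosity sense on $\Omega\setminus M_\Omega$. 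But at any $y\in\Sigma_\Omega$ the distance function is locally a minimum of several smooth distance functions to distinct closest boundary pieces, hence locally concave in the transverse direction; a paraboloid such as $\phi(x)=d_\Omega(y)+\nabla\phi(y)\cdot(x-y)-\beta|x-y|^2$ (with suitable $\beta>0$ and nonvanishing first-order part) touches $d_\Omega$ from above at $y$ with $\Delta_\infty\phi(y)<0$, violating the subsolution condition. Hence $\Sigma_\Omega\subseteq M_\Omega$; combined with the automatic inclusion $M_\Omega\subseteq\Sigma_\Omega$ (any differentiability point of $d_\Omega$ satisfies $|\nabla d_\Omega|=1\neq0$, ruling out a maximum), we conclude (\ref{MMS}). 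The chief technical obstacle is the subsolution step: the hyperdiffusive assumption $q(p)/p\to\infty$ is genuinely essential, since it is precisely what forces the source $\lambda_{p_n,q(p_n)}u_n^{q(p_n)-1}$ to decay at a super-exponential rate and absorb the diverging factor $|\nabla\phi|^{p_n-4}$, a cancellation that fails in the sub- and superdiffusive regimes treated by Charro--Peral and Charro--Parini.
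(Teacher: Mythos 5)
Your treatment of the PDE part is correct and is essentially the paper's argument: the supersolution half comes for free from Theorem \ref{main1}(4), and for the subsolution half you test $-\Delta_{p_n}\phi(x_n)\le\lambda_n u_n(x_n)^{q_n-1}$ at touching points $x_n\to x_0$, divide by $(p_n-2)|\nabla\phi(x_n)|^{p_n-4}$, and kill the right-hand side by taking $p_n$-th roots, using $\lambda_n^{1/p_n}\to\Lambda_\infty$, a uniform bound $u_n(x_n)\le c<1$, and $q_n/p_n\to\infty$ so that $c^{q_n/p_n}\to0$. This is exactly the paper's computation (the paper uses $(p_n-4)$-th roots inside the bracket; same thing). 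The forward implication \eqref{MMS}$\Rightarrow$\eqref{eq} also matches the paper: $d_\Omega$ is $\infty$-harmonic off $\Sigma_\Omega$ and Jensen's comparison principle gives uniqueness. (Minor imprecision: $d_\Omega$ need not be $C^2$ on all of $\Omega\setminus\Sigma_\Omega$; the correct statement, which the paper cites from Aronsson and Crandall--Evans--Gariepy, is that it is $\infty$-harmonic there in the viscosity sense.)

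The genuine gap is in the converse \eqref{eq}$\Rightarrow$\eqref{MMS}. Your test function $\phi(x)=d_\Omega(y)+\nabla\phi(y)\cdot(x-y)-\beta|x-y|^2$ does not in general touch $d_\Omega$ from above at a ridge point $y$: the kink of $\min(d_1,d_2)$ is concave only in the direction $w=\nabla d_1(y)-\nabla d_2(y)$, while in directions orthogonal to $w$ the function $d_\Omega$ is smooth and the term $-\beta|x-y|^2$ drops strictly below it. (Concretely, at the center of the rectangle $(-2,2)\times(-1,1)$ one has $d_\Omega=1-|x_2|$ near the origin, and your paraboloid lies below $d_\Omega$ along the $x_1$-axis.) Moreover, even with a corrected anisotropic quadratic that curves down only along $w$, one has $\Delta_\infty\phi(y)=\nabla\phi(y)^{T}D^2\phi(y)\nabla\phi(y)$, and since $\nabla d_1(y),\nabla d_2(y)$ are unit vectors, the midpoint of the superdifferential segment is \emph{orthogonal} to $w$; so unless you choose $\nabla\phi(y)$ strictly off-center in that segment, the negative curvature is invisible and you get $\Delta_\infty\phi(y)\ge0$, i.e.\ no violation of the subsolution condition. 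Your phrase ``with suitable $\beta>0$ and nonvanishing first-order part'' does not supply these two necessary choices, so the contradiction is not established as written. The paper avoids this entirely: if $u_\infty=d_\Omega/\|d_\Omega\|_\infty$ is $\infty$-harmonic in $\Omega\setminus M$, then by the Evans--Smart everywhere-differentiability theorem for infinity harmonic functions (reference \cite{Ev}) $d_\Omega$ is differentiable in $\Omega\setminus M$, hence $\Sigma_\Omega\subseteq M$, which together with $M\subseteq M_\Omega\subseteq\Sigma_\Omega$ forces \eqref{MMS}. Your route can probably be repaired along the lines above, but as it stands this direction is not proved.
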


We recall that $\Sigma_{\Omega},$ known as the ridge set of $\Omega,$ is
precisely the set of all points of $\Omega$ whose distance to the boundary is
achieved at least two points in $\partial\Omega.$ Hence, $\Sigma_{\Omega}$
contains $M_{\Omega}.$ A consequence of Theorem \ref{main2} is that the
equality (\ref{eq}) is not possible if $\Sigma_{\Omega}$ is larger than
$M_{\Omega},$ as it is the case of a square and other polygons.

We emphasize that the closed set $M$ is given abstractly as the set of maximum
points of $u_{\infty},$ but it enjoys the property of being a subset of
$M_{\Omega}.$ We also observe that $u_{\infty}$ might depend on a particular
subsequence of the family $\left(  u_{p,q(p)}\right)  ,$ the same occurring
with $M.$ \ However, the uniqueness of $u_{\infty}$ is guaranteed whenever
$M=M_{\Omega}$ since the problem%
\begin{equation}
\left\{
\begin{array}
[c]{lll}%
-\Delta_{\infty}u=0 & \text{\textup{in}} & \Omega\setminus M_{\Omega}\\
u=0 & \text{\textup{on}} & \partial\Omega\\
u=1 & \text{on} & M_{\Omega}%
\end{array}
\right.  \label{umo}%
\end{equation}
has a unique viscosity solution according to the comparison principle by
Jensen in \cite{Jen} (note that $\partial(\Omega\setminus M_{\Omega}%
)=\partial\Omega\cup M_{\Omega}$). Thus, if $M=M_{\Omega}$ then $u_{\infty}$
must be the uniform limit of the whole family $\left(  u_{p,q(p)}\right)  $.
The simplest case is when $M_{\Omega}$ is a singleton, and it occurs when
$\Omega$ is a ball or a square, for instance. Actually, $\Sigma_{B_{R}(x_{0}%
)}=\left\{  x_{0}\right\}  $ for a ball $B_{R}(x_{0})$, so that the equalities
in (\ref{MMS}) hold. Thus, one has
\[
u_{\infty}(x)=1-\frac{\left\vert x-x_{0}\right\vert }{R}\text{ \ for all }x\in
B_{R}(x_{0}).
\]

If $M_{\Omega}$ is not a singleton, the determination of $M$ (or even the
verification that $M$ is a proper subset of $M_{\Omega}$) seems to be a
challenging task as it would require a deeper analysis of the extremal Sobolev
functions $u_{p,q(p)}$ under suitable assumptions on $\Omega$ (possibly
involving geometric aspects such as convexity or special symmetries).

In view of (\ref{Theta1}) the following corollary of Theorem \ref{main2} is
imediate and settles the hyperdiffusive case for $v_{p,q(p)}$ under
(\ref{Theta}).

\begin{corollary}
For all $p$ sufficiently large, let $v_{p,q(p)}$ be a positive least energy
solution to (\ref{dirq}), with $\mu_{p}$ satisfying (\ref{Theta}) and $q(p)$
satisfying (\ref{hyper}). Each sequence $v_{p_{n},q(p_{n})},$ with
$p_{n}\rightarrow\infty,$ admits a subsequence that converges uniformly to a
viscosity solution $v_{\infty}\in C_{0}(\overline{\Omega})\cap W^{1,\infty}(\Omega)$
to the Dirichlet problem%
\[
\left\{
\begin{array}
[c]{lll}%
-\Delta_{\infty}v=0 & \text{in} & \Omega\setminus M\\
v=0 & \text{on} & \partial\Omega\\
v=\Theta^{-1} & \text{in} & M,
\end{array}
\right.
\]
where $M$ is a closed subset of $M_{\Omega}.$ Moreover,
\[
v_{\infty}=\frac{d_{\Omega}}{\Theta\left\Vert d_{\Omega}\right\Vert _{\infty}}%
\]
if and only if $M=M_{\Omega}=\Sigma_{\Omega}.$
\end{corollary}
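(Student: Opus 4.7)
The plan is to leverage the scaling identity (\ref{scale}) together with Theorem \ref{main2} and reduce everything to the known behavior of the normalized family $(u_{p,q(p)})$. Since the hyperdiffusive hypothesis (\ref{hyper}) forces $q(p)>p$ for all large $p$, every least energy solution $v_{p,q(p)}$ does admit the representation $v_{p,q(p)} = c_p\, u_{p,q(p)}$ with $c_p := (\lambda_{p,q(p)}/\mu_p)^{1/(q(p)-p)}$ and some positive Sobolev extremal $u_{p,q(p)}$. Thus the first task is to compute $\lim_{p\to\infty} c_p$.

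Taking logarithms and regrouping exponents,
\begin{equation*}
\log c_p \;=\; \frac{p}{q(p)-p}\cdot\frac{\log\lambda_{p,q(p)}}{p} \;-\; \frac{q(p)}{q(p)-p}\cdot\frac{\log\mu_p}{q(p)}.
\end{equation*}
Under (\ref{hyper}) we have $p/(q(p)-p)\to 0$ and $q(p)/(q(p)-p)\to 1$. Combining with (\ref{lim}) of Theorem \ref{main1} (which gives $p^{-1}\log\lambda_{p,q(p)}\to\log\Lambda_\infty$) and the hypothesis (\ref{Theta}) (which gives $q(p)^{-1}\log\mu_p\to\log\Theta$), I obtain $\log c_p\to -\log\Theta$, that is, $c_p\to\Theta^{-1}$.

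Next, given any sequence $p_n\to\infty$, I apply Theorem \ref{main2} to the corresponding $u_{p_n,q(p_n)}$ to extract a subsequence converging uniformly on $\overline{\Omega}$ to some $u_\infty\in C_0(\overline\Omega)\cap W^{1,\infty}(\Omega)$ that solves (\ref{dirM}). Multiplying by $c_{p_n}\to\Theta^{-1}$ yields $v_{p_n,q(p_n)}\to v_\infty := \Theta^{-1}u_\infty$ uniformly on $\overline\Omega$, with $v_\infty\in C_0(\overline\Omega)\cap W^{1,\infty}(\Omega)$. Since the infinity Laplacian is $3$-homogeneous (so $\Delta_\infty(\kappa u)=\kappa^3\Delta_\infty u$ for constant $\kappa>0$ in the viscosity sense), $v_\infty$ satisfies $-\Delta_\infty v=0$ in $\Omega\setminus M$ in the viscosity sense, together with $v_\infty=0$ on $\partial\Omega$ and $v_\infty\equiv\Theta^{-1}$ on $M$. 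Note $M$ is unchanged by the positive rescaling, since the maximum points of $v_\infty$ and of $u_\infty$ coincide; hence $M$ remains a closed subset of $M_\Omega$ by item 3 of Theorem \ref{main1}. For the final equivalence, $v_\infty = d_\Omega/(\Theta\|d_\Omega\|_\infty)$ is the same as $u_\infty = d_\Omega/\|d_\Omega\|_\infty$, which by the second part of Theorem \ref{main2} is equivalent to $M=M_\Omega=\Sigma_\Omega$.

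The only nontrivial step is the scalar calibration $c_p\to\Theta^{-1}$: the exponent $1/(q(p)-p)$ mixes the asymptotics of $\lambda_{p,q(p)}^{1/p}\to\Lambda_\infty$ and $\mu_p^{1/q(p)}\to\Theta$ at different rates, and it is precisely the dominance $q(p)/p\to\infty$ that suppresses the $\lambda$-contribution and isolates $\Theta^{-1}$. Everything else is a direct translation of Theorem \ref{main2}'s conclusions through this constant multiplicative rescaling.
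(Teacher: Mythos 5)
Your proposal is correct and follows essentially the same route as the paper, which treats the corollary as immediate from the scaling identity (\ref{scale}) and the limit (\ref{Theta1}) combined with Theorem \ref{main2}; your logarithmic computation of $c_p\to\Theta^{-1}$ is exactly the calculation implicit in the paper's derivation of (\ref{Theta1}), just written out in full. The additional remarks on the homogeneity of $\Delta_\infty$ and the invariance of $M$ under the rescaling are correct and fill in details the paper leaves tacit.
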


Inspecting carefully \cite{ChaPe} one can verify that, in view of the limit
(\ref{lim}), the results obtained by Charro and Peral described above are also
valid for $\mu_{p}=\lambda_{p,q(p)}$ when $Q=0,$ but under the hypothesis
(\ref{qinf}). More precisely, such results show that the family $(u_{p,q(p)})$
converges uniformly to $u_{\infty}=\Lambda_{\infty}d_{\Omega}$ since
\[
1\leq q(p)<p,\text{ \ }\lim_{p\rightarrow\infty}q(p)=\infty\text{ \ and
\ }\lim_{p\rightarrow\infty}\frac{q(p)}{p}=0.
\]
The uniqueness of $u_{\infty}$ is guaranteed by the fact (see \cite[Theorem
2.1]{Jen}) that $\Lambda_{\infty}d_{\Omega}$ is the only viscosity solution to
the problem%
\[
\left\{
\begin{array}
[c]{lll}%
\min\left\{  \left\vert \nabla u\right\vert -\Lambda_{\infty},-\Delta_{\infty
}u\right\}  =0 & \mathrm{in} & \Omega\\
u=0 & \mathrm{on} & \partial\Omega.
\end{array}
\right.
\]

In order to complement our analysis on the asymptotic behavior of
$\lambda_{p,q(p)}$ and $u_{p,q(p)}$ we prove the following result in which
$Q=0$ and (\ref{qinf}) is not fulfilled.

\begin{proposition}
\label{main3}If
\begin{equation}
r:=\lim_{p\rightarrow\infty}q(p)<\infty, \label{r}%
\end{equation}
then%
\[
\lim_{p\rightarrow\infty}\lambda_{p,q(p)}^{1/p}=\left\Vert d_{\Omega
}\right\Vert _{r}^{-1},
\]
and%
\[
\lim_{p\rightarrow\infty}u_{p,q(p)}=\frac{d_{\Omega}}{\left\Vert d_{\Omega
}\right\Vert _{r}}\text{ \ uniformly in }\overline{\Omega}.
\]

\end{proposition}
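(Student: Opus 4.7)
The plan is to combine a test-function upper bound on $\lambda_{p,q(p)}$ with a compactness argument that identifies the limit of $u_{p,q(p)}$ explicitly as $d_{\Omega}/\|d_{\Omega}\|_{r}$.

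First I would use $d_{\Omega}/\|d_{\Omega}\|_{q(p)}$ as an admissible competitor in (\ref{lpq}). Since $|\nabla d_{\Omega}|=1$ a.e.\ in $\Omega$, this yields
$$\lambda_{p,q(p)}^{1/p} \le \frac{|\Omega|^{1/p}}{\|d_{\Omega}\|_{q(p)}},$$
and letting $p\to\infty$ together with (\ref{r}) produces $\limsup_{p\to\infty}\lambda_{p,q(p)}^{1/p}\le \|d_{\Omega}\|_{r}^{-1}$. Combined with Hölder's inequality
$$\|\nabla u_{p,q(p)}\|_{s} \le |\Omega|^{1/s-1/p}\,\lambda_{p,q(p)}^{1/p} \qquad (p>s>N),$$
this shows that $\{u_{p,q(p)}\}$ is bounded in $W_{0}^{1,s}(\Omega)$ for every $s>N$. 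By Morrey's compact embedding and a diagonal extraction, any sequence $p_{n}\to\infty$ admits a subsequence along which $u_{n}:=u_{p_{n},q(p_{n})}$ converges uniformly on $\overline{\Omega}$ (and weakly in each $W^{1,s}$, $s>N$) to some nonnegative $u_{\infty}\in C_{0}(\overline{\Omega})\cap W^{1,\infty}(\Omega)$.

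To identify $u_{\infty}$, I would observe that the uniform $L^{\infty}$-bound on $u_{n}$ together with $q(p_{n})\to r$ permits dominated convergence in the constraint $\int u_{n}^{q(p_{n})}=1$, yielding $\|u_{\infty}\|_{r}=1$. For each fixed $s>N$, weak lower semicontinuity of the $L^{s}$-gradient norm combined with the Hölder bound gives
$$\|\nabla u_{\infty}\|_{s} \le \liminf_{n}\|\nabla u_{n}\|_{s} \le |\Omega|^{1/s}\,\liminf_{n}\lambda_{p_{n},q(p_{n})}^{1/p_{n}},$$
and letting $s\to\infty$ produces $\|\nabla u_{\infty}\|_{\infty} \le \liminf_{n}\lambda_{p_{n},q(p_{n})}^{1/p_{n}}\le\|d_{\Omega}\|_{r}^{-1}$. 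Since $u_{\infty}=0$ on $\partial\Omega$, the elementary Lipschitz estimate $u_{\infty}(x)\le\|\nabla u_{\infty}\|_{\infty}\,d_{\Omega}(x)$ holds on $\overline{\Omega}$; raising it to the $r$-th power and integrating forces
$$1=\|u_{\infty}\|_{r}^{r} \le \|\nabla u_{\infty}\|_{\infty}^{r}\,\|d_{\Omega}\|_{r}^{r},$$
whence $\|\nabla u_{\infty}\|_{\infty}\ge\|d_{\Omega}\|_{r}^{-1}$. Thus every inequality in this chain is an equality: this pins down $\|\nabla u_{\infty}\|_{\infty}=\|d_{\Omega}\|_{r}^{-1}$, fixes $\liminf_{n}\lambda_{p_{n},q(p_{n})}^{1/p_{n}}=\|d_{\Omega}\|_{r}^{-1}$, and forces $u_{\infty}=d_{\Omega}/\|d_{\Omega}\|_{r}$ pointwise. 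Since this limit does not depend on the chosen subsequence, the full family converges uniformly to $d_{\Omega}/\|d_{\Omega}\|_{r}$, and the sandwich above gives $\lim_{p\to\infty}\lambda_{p,q(p)}^{1/p}=\|d_{\Omega}\|_{r}^{-1}$.

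The main delicate point I expect to handle with care is the interplay between the moving exponent $q(p_{n})\to r$ and the constraint $\|u_{n}\|_{q(p_{n})}=1$; this is resolved by the uniform $L^{\infty}$-bound on $u_{n}$ coming from Morrey's embedding for any fixed $s>N$, which dominates $u_{n}^{q(p_{n})}$ by a constant and thereby legitimates the use of dominated convergence to transfer the constraint into $\|u_{\infty}\|_{r}=1$.
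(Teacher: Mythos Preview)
Your proof is correct and reaches the same conclusion as the paper by a genuinely different route. Both arguments obtain the upper bound $\limsup_{p}\lambda_{p,q(p)}^{1/p}\le\|d_{\Omega}\|_{r}^{-1}$ by testing with $d_{\Omega}$, and both finish the identification of $u_{\infty}$ by combining a pointwise bound $u_{\infty}\le Cd_{\Omega}$ with the normalization $\|u_{\infty}\|_{r}=1$ (passed from the constraint $\|u_{n}\|_{q(p_{n})}=1$ via dominated convergence). The essential difference is in how the \emph{lower} bound for $\lambda_{p,q(p)}^{1/p}$ is obtained. The paper proves $\liminf_{p}\lambda_{p,q(p)}^{1/p}\ge\|d_{\Omega}\|_{r}^{-1}$ \emph{before} extracting any limit function, by invoking the monotonicity of $q\mapsto|\Omega|^{p/q}\lambda_{p,q}$ (from \cite{E13}, \cite{An}) together with the constant-exponent case $\lim_{p}\lambda_{p,s}^{1/p}=\|d_{\Omega}\|_{s}^{-1}$ from \cite{EPM22}; once the limit of $\lambda^{1/p}$ is known, Lemma~\ref{F1} gives $0\le u_{\infty}\le d_{\Omega}/\|d_{\Omega}\|_{r}$ directly and the normalization forces equality. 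You instead extract the lower bound as a \emph{by-product} of the identification of $u_{\infty}$: the chain $\|\nabla u_{\infty}\|_{\infty}\le\liminf\lambda^{1/p}\le\limsup\lambda^{1/p}\le\|d_{\Omega}\|_{r}^{-1}$ is closed by the integral inequality $1=\|u_{\infty}\|_{r}\le\|\nabla u_{\infty}\|_{\infty}\|d_{\Omega}\|_{r}$, which simultaneously pins down $u_{\infty}$ and the limit of $\lambda^{1/p}$ along the subsequence (hence, by uniqueness of the limit, along the full family). Your argument is more self-contained, avoiding both the monotonicity lemma and the appeal to \cite{EPM22}; the paper's argument is shorter once those external ingredients are granted and has the minor structural advantage of decoupling the computation of $\lim\lambda^{1/p}$ from that of $u_{p,q(p)}$.
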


Theorem \ref{main1}, Theorem \ref{main2} and Proposition \ref{main3} are
proved in Section \ref{sec3}.

\section{Preliminaries \label{sec2}}

Let us recall the concept of viscosity solution to the partial differential
equation (PDE)%
\begin{equation}
F(u,\nabla u,D^{2}u)=0, \label{F}%
\end{equation}
where $F:\mathbb{R}\times\mathbb{R}^{N}\times\mathbb{S}^{N}$ is a continuous
function, with $\mathbb{S}^{N}$ denoting the set of all real symmetric
matrices of order $N\times N.$

In the sequel, $B(x_{0})$ will denote a ball centered at $x_{0}.$

\begin{definitions}
Let $D\subset\mathbb{R}^{N}$ be a bounded domain and let $u\in C(D).$ We say that:

\begin{enumerate}
\item $u$ is a viscosity supersolution to (\ref{F})\ in $D$ if, for each
$x_{0}\in D$ and each ball $B(x_{0})\subset D$ one has%
\[
F(\phi(x_{0}),\nabla\phi(x_{0}),D^{2}\phi(x_{0}))\geq0
\]
for every $\phi\in C^{2}(B(x_{0}))$ such that
\[
\phi(x)-u(x)<0=\phi(x_{0})-u(x_{0})\text{ \ for all }x\in B(x_{0}%
)\setminus\left\{  x_{0}\right\}  .
\]

\item $u$ is a viscosity subsolution to (\ref{F}) in $D$ if, for each
$x_{0}\in D$ and each ball $B(x_{0})\subset D$ one has%
\[
F(\phi(x_{0}),\nabla\phi(x_{0}),D^{2}\phi(x_{0}))\leq0
\]
for every $\phi\in C^{2}(B(x_{0}))$ such that%
\[
\phi(x)-u(x)>0=\phi(x_{0})-u(x_{0})\text{ \ for all }x\in B(x_{0}%
)\setminus\left\{  x_{0}\right\}  .
\]

\item $u$ is a viscosity solution to (\ref{F}) in $D$ if $u$ is both a
viscosity subsolution and a viscosity supersolution to $F$ in $D.$
\end{enumerate}
\end{definitions}

We say that $u\in C(D)$ is infinity harmonic in $D$ if $u$ is a viscosity
solution to the PDE
\begin{equation}
-\Delta_{\infty}u=0 \label{ilap}%
\end{equation}
in $D.$ If $u$ is a viscosity supersolution (subsolution) to (\ref{ilap}) in
$D$ we say that $u$ is infinity superharmonic (subharmonic) in $D$ and use
$-\Delta_{\infty}u\geq0$ ($-\Delta_{\infty}u\leq0$) in $D$ as\ notation.

The following result is well known (see \cite[Theorem 1]{Bhat} and
\cite[Corollary 4.5]{ML}).

\begin{lemma}
\label{+}Let $D\subset\mathbb{R}^{N}$ be a bounded domain and let $u\in C(D).$
If $u$ is a nonnegative and $-\Delta_{\infty}u\geq0$ in $D,$ then either
$u\equiv0$ in $D$ or $u>0$ in $D.$
\end{lemma}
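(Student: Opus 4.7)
The plan is to derive the dichotomy from the Harnack-type inequality for nonnegative viscosity supersolutions of $-\Delta_{\infty}u=0$, which is precisely the content of the two references cited immediately before the lemma.

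First I would introduce the zero set $Z:=\{x\in D:u(x)=0\}$. Continuity of $u$ makes $Z$ relatively closed in $D$, and since $D$ is a connected open set (being a domain), the dichotomy reduces to proving that $Z$ is also relatively open in $D$: the two possibilities $Z=\emptyset$ and $Z=D$ then correspond respectively to $u>0$ in $D$ and $u\equiv 0$ in $D$.

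To establish openness, I fix $x_{0}\in Z$ and choose a radius $r>0$ with $B_{2r}(x_{0})\Subset D$. The Harnack inequality of \cite[Theorem 1]{Bhat} and \cite[Corollary 4.5]{ML}, applied to the nonnegative $\infty$-superharmonic function $u$ on the enlarged ball, supplies a constant $C=C(N)>0$ such that
$$\sup_{B_{r}(x_{0})}u\;\le\;C\inf_{B_{r}(x_{0})}u.$$
Since $x_{0}\in B_{r}(x_{0})$ and $u(x_{0})=0$, the right-hand side vanishes; hence $u\equiv 0$ on $B_{r}(x_{0})$ and therefore $B_{r}(x_{0})\subset Z$, which shows that $x_{0}$ is interior to $Z$.

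There is no serious obstacle: the whole content of the lemma is packaged inside the cited Harnack estimate, and the only point deserving attention is compatibility of hypotheses (viscosity interpretation, nonnegativity of $u$, and the enlarged-ball condition $B_{2r}(x_{0})\Subset D$), all of which are immediate in our setting. A self-contained alternative would be to deduce openness of $Z$ directly from the comparison-with-cones characterization of infinity superharmonicity, propagating the equality $u=0$ from $x_{0}$ along segments to nearby points; but given that \cite{Bhat} and \cite{ML} already supply the required Harnack inequality, the short route above is the most natural plan.
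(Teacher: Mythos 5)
Your argument is correct and follows exactly the route the paper intends: the paper states Lemma \ref{+} without proof, citing precisely \cite[Theorem 1]{Bhat} and \cite[Corollary 4.5]{ML} for the Harnack inequality for nonnegative $\infty$-superharmonic functions, and your open-and-closed connectedness argument for the zero set is the standard way to pass from that estimate to the strong minimum principle. No gaps.
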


By a viscosity solution to the Dirichlet problem (\ref{dirM}) we mean a
function $u\in C(\overline{\Omega})$ that is infinity harmonic in
$\Omega\setminus M$ and such that: $u=0$ on $\partial\Omega$ and $u=1$ on $M.$

We observe that
\[
\Delta_{p}u=(p-2)\left\vert \nabla u\right\vert ^{p-4}\left\{  \frac{1}%
{p-2}\left\vert \nabla u\right\vert ^{2}\Delta u+\Delta_{\infty}u\right\}
\]
whenever $u$ is a function of class $C^{2}.$ Thus, in the viscosity solution
approach the partial differential equation in (\ref{Dir}) is usually written
as
\begin{equation}
-(p-2)\left\vert \nabla u\right\vert ^{p-4}\left\{  \frac{1}{p-2}\left\vert
\nabla u\right\vert ^{2}\Delta u+\Delta_{\infty}u\right\}  -\lambda
_{p,q}\left\vert u\right\vert ^{q-2}u=0. \label{Dirv}%
\end{equation}

By a viscosity solution to (\ref{Dir}) we mean a viscosity solution $u\in
C(\overline{\Omega})$ to (\ref{Dirv}) that vanishes on $\partial\Omega.$

We will make use of the following known facts stated in form of lemmas.

\begin{lemma}
\label{F1}Let $p_{n}\rightarrow\infty$ and for each $n\in\mathbb{N}$ let
$u_{n}\in W_{0}^{1,p_{n}}(\Omega)$ be nonnegative in $\Omega.$ Suppose that%
\[
\limsup_{n\rightarrow\infty}\left\Vert \nabla u_{n}\right\Vert _{p_{n}}\leq
C.
\]
There exists a subsequence of $\left(  u_{n}\right)  $ converging uniformly in
$\overline{\Omega}$ to a function $u_{\infty}\in C_{0}(\overline{\Omega})\cap
W^{1,\infty}(\Omega)$ such that
\[
0\leq u_{\infty}(x)\leq\left\Vert \nabla u\right\Vert _{\infty}d_{\Omega
}(x)\leq Cd_{\Omega}(x)\text{ \ for all }x\in\overline{\Omega}.
\]

\end{lemma}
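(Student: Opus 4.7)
The plan is to combine the Morrey embedding (which is available since $p_n>N$ eventually) with a Hölder-interpolation/diagonal argument to upgrade the conclusion to $W^{1,\infty}$, and then to read off the pointwise bound from Lipschitz continuity to the boundary.

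First, I would fix any real number $m>N$ and compare $L^m$ with $L^{p_n}$: by Hölder's inequality on $\Omega$, for all $p_n>m$,
\[
\|\nabla u_n\|_m\le|\Omega|^{\frac{1}{m}-\frac{1}{p_n}}\|\nabla u_n\|_{p_n}.
\]
Using the hypothesis $\limsup\|\nabla u_n\|_{p_n}\le C$, this shows that $(u_n)$ is bounded in $W_0^{1,m}(\Omega)$, uniformly in $n$ (for $n$ large enough that $p_n>m$). Since $m>N$, the Morrey embedding $W_0^{1,m}(\Omega)\hookrightarrow C^{0,1-N/m}(\overline\Omega)$ yields a uniform bound on the Hölder seminorms of $(u_n)$, hence equicontinuity together with uniform boundedness (values on $\partial\Omega$ are $0$).

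Next I would apply Arzelà--Ascoli to extract a subsequence, still denoted $(u_n)$, converging uniformly on $\overline\Omega$ to a continuous function $u_\infty$; nonnegativity is preserved, and the Dirichlet condition $u_n|_{\partial\Omega}=0$ passes to give $u_\infty\in C_0(\overline\Omega)$. I would then perform a diagonal extraction in $m$: for each integer $m>N$, the bounded sequence $(u_n)$ in the reflexive space $W_0^{1,m}(\Omega)$ has a weakly convergent further subsequence; its weak limit must coincide with $u_\infty$ (because weak $W^{1,m}$-convergence implies convergence in $\mathcal{D}'(\Omega)$, where the uniform limit is also the limit). Weak lower semicontinuity of the norm and the Hölder inequality above then give
\[
\|\nabla u_\infty\|_m\le\liminf_{n\to\infty}\|\nabla u_n\|_m\le|\Omega|^{1/m}\,C.
\]

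Letting $m\to\infty$, the left-hand side tends to $\|\nabla u_\infty\|_\infty$ (a standard fact on bounded domains), and $|\Omega|^{1/m}\to1$, so $u_\infty\in W^{1,\infty}(\Omega)$ with $\|\nabla u_\infty\|_\infty\le C$. Finally, since $u_\infty\in W^{1,\infty}(\Omega)\cap C_0(\overline\Omega)$, it is Lipschitz on $\overline\Omega$ with constant $\|\nabla u_\infty\|_\infty$; choosing $y\in\partial\Omega$ a nearest point to $x$ gives
\[
0\le u_\infty(x)=u_\infty(x)-u_\infty(y)\le\|\nabla u_\infty\|_\infty|x-y|=\|\nabla u_\infty\|_\infty\,d_\Omega(x)\le C\,d_\Omega(x),
\]
which is the desired chain of inequalities.

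The only delicate point is making sure that the uniform limit obtained from Arzelà--Ascoli \emph{is} the same object as the weak $W_0^{1,m}$ limits for every $m>N$, so that the estimate $\|\nabla u_\infty\|_m\le|\Omega|^{1/m}C$ is valid simultaneously for all $m$; a standard diagonal extraction and the uniqueness of limits in $\mathcal{D}'(\Omega)$ settle this. Everything else is routine interpolation and semicontinuity.
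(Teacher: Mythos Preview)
Your argument is correct and follows the standard route (H\"older to compare $L^{p_n}$ with a fixed $L^m$, Morrey embedding for compactness, weak lower semicontinuity to pass to $W^{1,\infty}$, then the segment-to-the-boundary estimate for the pointwise bound). The paper, however, does \emph{not} prove this lemma: it is listed among the ``known facts stated in form of lemmas'' in Section~\ref{sec2} and is quoted as a black box in the proofs of Theorems~\ref{main1} and \ref{main2} and of Proposition~\ref{main3}. So there is no in-paper proof to compare yours with; what you wrote is precisely the classical justification that the paper takes for granted.

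Two small remarks. First, the diagonal extraction over $m$ is not really needed: once the uniform limit $u_\infty$ is fixed, for each $m>N$ any further weakly convergent subsequence in $W_0^{1,m}$ must have $u_\infty$ as its limit, which already gives $\|\nabla u_\infty\|_m\le|\Omega|^{1/m}C$; since this is a bound on the single function $u_\infty$, it holds simultaneously for all $m$ without re-extracting. Second, the claim that $u_\infty$ is ``Lipschitz on $\overline\Omega$ with constant $\|\nabla u_\infty\|_\infty$'' is not literally true for nonconvex $\Omega$, but your use of it is fine: when $y\in\partial\Omega$ is a nearest boundary point to $x$, the half-open segment $[x,y)$ lies in the ball $B(x,d_\Omega(x))\subset\Omega$, so integrating $\nabla u_\infty$ along that segment gives $u_\infty(x)=u_\infty(x)-u_\infty(y)\le\|\nabla u_\infty\|_\infty\,|x-y|=\|\nabla u_\infty\|_\infty\,d_\Omega(x)$ directly.
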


\begin{lemma}
\label{F2}For each $p>N$ the embedding $W_{0}^{1,p}(\Omega)\hookrightarrow
C(\overline{\Omega})$ is compact and the infimum
\[
\Lambda_{p}:=\inf\left\{  \left\Vert \nabla u\right\Vert _{p}:u\in W_{0}%
^{1,p}(\Omega)\text{ \ and \ }\left\Vert u\right\Vert _{\infty}=1\right\}
\]
is reached at a function $u_{p}\in W_{0}^{1,p}(\Omega)\cap C^{0,1-\frac{N}{p}%
}(\overline{\Omega}).$ Moreover (see \cite{EP16}),
\begin{equation}
\lim_{p\rightarrow\infty}\Lambda_{p}=\Lambda_{\infty}. \label{a}%
\end{equation}

\end{lemma}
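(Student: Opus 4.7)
\textbf{Proof proposal for Lemma \ref{F2}.}

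My plan has three independent ingredients. First I would deduce the compactness of the embedding $W_{0}^{1,p}(\Omega)\hookrightarrow C(\overline{\Omega})$ directly from Morrey's inequality: for $p>N$, every $u\in W_{0}^{1,p}(\Omega)$ admits a representative in $C^{0,1-N/p}(\overline{\Omega})$ with
\[
\left\Vert u\right\Vert_{\infty}+\sup_{x\neq y}\frac{\left\vert u(x)-u(y)\right\vert}{\left\vert x-y\right\vert^{1-N/p}}\leq C(N,p,\Omega)\left\Vert \nabla u\right\Vert_{p}.
\]
A norm-bounded sequence in $W_{0}^{1,p}(\Omega)$ is therefore uniformly bounded and equi-Hölder-continuous on $\overline{\Omega}$, so Arzelà--Ascoli delivers a uniformly convergent subsequence, which is exactly compactness of the inclusion into $C(\overline{\Omega})$.

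Second, the attainment of $\Lambda_{p}$ would follow from the direct method. I take a minimizing sequence $(w_{n})$ with $\left\Vert w_{n}\right\Vert_{\infty}=1$ and $\left\Vert \nabla w_{n}\right\Vert_{p}\to\Lambda_{p}$. It is bounded in the reflexive space $W_{0}^{1,p}(\Omega)$, so along a subsequence it converges weakly to some $u_{p}\in W_{0}^{1,p}(\Omega)$ and, by the compact embedding just established, uniformly on $\overline{\Omega}$. Uniform convergence preserves the sup norm, so $\left\Vert u_{p}\right\Vert_{\infty}=1$, and weak lower semicontinuity of the $W^{1,p}$-seminorm gives $\left\Vert \nabla u_{p}\right\Vert_{p}\leq\liminf\left\Vert \nabla w_{n}\right\Vert_{p}=\Lambda_{p}$, forcing equality. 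The claimed Hölder regularity of $u_{p}$ is immediate from Morrey's inequality.

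Third, the limit (\ref{a}) would be proved by two one-sided bounds. Since $d_{\Omega}/\left\Vert d_{\Omega}\right\Vert_{\infty}$ is admissible in the definition of $\Lambda_{p}$ and $\left\vert \nabla d_{\Omega}\right\vert=1$ a.e., I obtain
\[
\Lambda_{p}\leq\frac{\left\Vert \nabla d_{\Omega}\right\Vert_{p}}{\left\Vert d_{\Omega}\right\Vert_{\infty}}=\frac{\left\vert \Omega\right\vert^{1/p}}{\left\Vert d_{\Omega}\right\Vert_{\infty}}\longrightarrow\Lambda_{\infty}.
\]
For the reverse inequality, fix any $p_{n}\to\infty$. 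For each $q>N$ and $n$ large, Hölder yields $\left\Vert \nabla u_{p_{n}}\right\Vert_{q}\leq\left\vert \Omega\right\vert^{1/q-1/p_{n}}\Lambda_{p_{n}}$, which is bounded in $n$ thanks to the upper bound just obtained. A diagonal argument produces a subsequence converging weakly in $W_{0}^{1,q}(\Omega)$ for every such $q$, and uniformly on $\overline{\Omega}$ to some $u_{\infty}\in C_{0}(\overline{\Omega})$ with $\left\Vert u_{\infty}\right\Vert_{\infty}=1$. Weak lower semicontinuity gives $\left\Vert \nabla u_{\infty}\right\Vert_{q}\leq\left\vert \Omega\right\vert^{1/q}\liminf_{n}\Lambda_{p_{n}}$; passing $q\to\infty$ yields $u_{\infty}\in W^{1,\infty}(\Omega)$ with $\left\Vert \nabla u_{\infty}\right\Vert_{\infty}\leq\liminf_{n}\Lambda_{p_{n}}$. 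Admissibility in (\ref{Linf}) then gives $\Lambda_{\infty}\leq\left\Vert \nabla u_{\infty}\right\Vert_{\infty}\leq\liminf_{n}\Lambda_{p_{n}}$, and the two bounds combine to (\ref{a}).

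The only delicate point I anticipate is the last passage $q\to\infty$: from the family of $L^{q}$-bounds $\left\Vert \nabla u_{\infty}\right\Vert_{q}\leq\left\vert \Omega\right\vert^{1/q}\liminf_{n}\Lambda_{p_{n}}$ one must actually deduce the corresponding $L^{\infty}$-bound. This is standard but requires a short Chebyshev-type argument: for each $\varepsilon>0$, denoting $A:=\liminf_{n}\Lambda_{p_{n}}$, the set $\{\left\vert \nabla u_{\infty}\right\vert>A+\varepsilon\}$ satisfies $\left\vert\{\cdots\}\right\vert^{1/q}(A+\varepsilon)\leq\left\vert \Omega\right\vert^{1/q}A$, whose right-hand side converges to $A<A+\varepsilon$, forcing the set to have measure zero.
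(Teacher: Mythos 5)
Your proof is correct and complete; note that the paper itself offers no proof of Lemma \ref{F2}, stating it as a known fact and citing \cite{EP16} for the limit (\ref{a}), and your argument (Morrey plus Arzel\`a--Ascoli for compactness, the direct method for attainment, $d_{\Omega}/\left\Vert d_{\Omega}\right\Vert_{\infty}$ as a competitor for the upper bound, and a diagonal/weak lower semicontinuity argument with the Chebyshev passage $q\to\infty$ for the lower bound) is precisely the standard route taken in that reference and in \cite{JLM}. The one step worth making explicit is that in the lower bound you should extract the subsequence along which $\Lambda_{p_{n}}$ attains its $\liminf$ before diagonalizing, but this is cosmetic and does not affect the argument.
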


The proof of the next lemma follows directly from the proof of Lemma 1.8 in
\cite{JLM}, where the case $q(p)=p$ is treated.

\begin{lemma}
\label{pvis}The Sobolev extremal function $u_{p,q(p)}$ is a viscosity solution
to
\[
\left\{
\begin{array}
[c]{lll}%
-\Delta_{p}u=\lambda_{p,q(p)}\left\vert u\right\vert ^{q(p)-2}u &
\text{\textup{in}} & \Omega\\
u=0 & \text{\textup{on}} & \partial\Omega.
\end{array}
\right.
\]

\end{lemma}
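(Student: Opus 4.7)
The plan is to adapt the proof of Lemma~1.8 in \cite{JLM}, which handles the case $q(p)=p$: the argument there is purely local and uses only the continuity of the zeroth-order term $t\mapsto|t|^{q(p)-2}t$, so no specific feature of $q(p)=p$ is exploited. Writing $u:=u_{p,q(p)}$, the assumption $p>N$ guarantees through the Sobolev embedding that $u\in C(\overline{\Omega})$ and $u=0$ on $\partial\Omega$; it therefore remains to verify the viscosity super- and subsolution properties at every interior point.

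I would argue the supersolution property by contradiction; the subsolution case is completely symmetric. Suppose $x_{0}\in\Omega$, a ball $B(x_{0})\subset\Omega$, and $\phi\in C^{2}(B(x_{0}))$ satisfy $\phi(x_{0})=u(x_{0})$, $\phi<u$ on $B(x_{0})\setminus\{x_{0}\}$, and
\[
-\Delta_{p}\phi(x_{0})-\lambda_{p,q(p)}|\phi(x_{0})|^{q(p)-2}\phi(x_{0})<0.
\]
Joint continuity of $\phi$, $\Delta_{p}\phi$, $u$ together with $\phi(x_{0})=u(x_{0})$ lets me shrink $B(x_{0})$ to a smaller ball $B_{r}(x_{0})$ on which the strengthened pointwise inequality
\[
-\Delta_{p}\phi(x)<\lambda_{p,q(p)}|u(x)|^{q(p)-2}u(x)\qquad\text{for all }x\in B_{r}(x_{0})
\]
holds; note that the right-hand side now involves $u$ rather than $\phi$.

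Next, put $m:=\min_{\partial B_{r}(x_{0})}(u-\phi)>0$ and take the nonnegative cut-off test function $\psi:=(\phi+\tfrac{m}{2}-u)_{+}$, which lies in $W_{0}^{1,p}(B_{r}(x_{0}))$, is compactly supported in $B_{r}(x_{0})$, and satisfies $\psi(x_{0})=\tfrac{m}{2}>0$. Plugging $\psi$ into the weak formulation for $u$ and subtracting the classical integration by parts $\int|\nabla\phi|^{p-2}\nabla\phi\cdot\nabla\psi\,dx=-\int\Delta_{p}\phi\,\psi\,dx$ (valid since $\phi\in C^{2}$, $p>2$, and $\psi$ vanishes on $\partial B_{r}(x_{0})$) produces
\[
\int_{\{\psi>0\}}\bigl(|\nabla u|^{p-2}\nabla u-|\nabla\phi|^{p-2}\nabla\phi\bigr)\cdot(\nabla\phi-\nabla u)\,dx=\int_{B_{r}(x_{0})}\bigl(\lambda_{p,q(p)}|u|^{q(p)-2}u+\Delta_{p}\phi\bigr)\psi\,dx.
\]
On $\{\psi>0\}$ one has $\nabla\psi=\nabla\phi-\nabla u$, so the left-hand side is nonpositive by strict monotonicity of $\xi\mapsto|\xi|^{p-2}\xi$, while the right-hand side is strictly positive by the pointwise inequality above together with $\psi\not\equiv0$; this contradiction proves the supersolution property. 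The subsolution inequality follows by the same scheme applied with $\psi:=(u-\phi+\tfrac{m}{2})_{+}$ and the reversed strict inequality.

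The only genuinely delicate ingredient is the upgrade from the strict inequality at $x_{0}$ (involving $\phi$) to a pointwise strict inequality on a whole ball (in which the zeroth-order term is evaluated at $u$), a step that hinges on $\phi(x_{0})=u(x_{0})$ together with continuity of $t\mapsto|t|^{q(p)-2}t$; the remainder is a routine application of monotonicity and the weak formulation, which is why the author simply refers to \cite{JLM}.
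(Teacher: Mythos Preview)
Your argument is correct and is precisely the adaptation of Lemma~1.8 in \cite{JLM} that the paper has in mind; the paper gives no details beyond that reference, and you have faithfully reproduced the standard contradiction argument (touching test function, shrinking by continuity, cut-off $(\phi+\tfrac{m}{2}-u)_{+}$, monotonicity of $\xi\mapsto|\xi|^{p-2}\xi$). Nothing further is needed.
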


\begin{lemma}
\label{lm2}The function $d_{\Omega}/\left\Vert d_{\Omega}\right\Vert _{\infty
}$ is the only viscosity solution to (\ref{umo}) if and only if $\Sigma
_{\Omega}=M_{\Omega}.$
\end{lemma}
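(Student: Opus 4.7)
The plan is to reduce Lemma \ref{lm2} to the question of whether $w:=d_{\Omega}/\|d_{\Omega}\|_{\infty}$ is infinity-harmonic in $\Omega\setminus M_{\Omega}$. Since $w$ satisfies the Dirichlet data in (\ref{umo}) automatically, and since Jensen's comparison principle \cite{Jen} guarantees that (\ref{umo}) admits at most one viscosity solution, the lemma is equivalent to the statement that $w$ is a viscosity solution of $-\Delta_{\infty}u=0$ in $\Omega\setminus M_{\Omega}$ if and only if $\Sigma_{\Omega}=M_{\Omega}$.

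For the \emph{if} direction, assume $\Sigma_{\Omega}=M_{\Omega}$. Then $\Sigma_{\Omega}$ is closed (as $M_{\Omega}$ is closed), and $\Omega\setminus M_{\Omega}=\Omega\setminus\Sigma_{\Omega}$ is the open set on which $d_{\Omega}$ is classically differentiable. Because $\partial\Omega$ is smooth, $d_{\Omega}$ is in fact smooth on each connected component of $\Omega\setminus\Sigma_{\Omega}$ (it coincides locally with the distance to a smooth portion of $\partial\Omega$), and $|\nabla d_{\Omega}|\equiv 1$ there. Differentiating this identity yields $\Delta_{\infty}d_{\Omega}\equiv 0$ classically, hence in the viscosity sense. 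Thus $w$ is a viscosity solution to (\ref{umo}), and Jensen's comparison gives uniqueness.

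For the \emph{only if} direction, I argue by contrapositive. Assume $\Sigma_{\Omega}\neq M_{\Omega}$ and pick $x_{0}\in\Sigma_{\Omega}\setminus M_{\Omega}$; by definition $d_{\Omega}(x_{0})=:d$ is achieved by at least two distinct boundary points $y_{1},y_{2}\in\partial\Omega$. Setting $e_{i}:=(x_{0}-y_{i})/d$ and letting $2\theta\in(0,\pi]$ denote the angle between $e_{1}$ and $e_{2}$, the inequality $d_{\Omega}(x)\leq|x-y_{i}|$ yields, by a first-order expansion,
\[
d_{\Omega}(x_{0}+rv)-d\leq r\min_{i=1,2}(e_{i}\cdot v)+O(r^{2}),
\]
whose supremum over unit $v$ equals $\cos\theta<1$. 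Fixing any $L\in(\cos\theta,1)$, for all sufficiently small $r>0$,
\[
\sup_{y\in\partial B_{r}(x_{0})}\bigl(d_{\Omega}(y)-L|y-x_{0}|\bigr)\leq d+(\cos\theta-L)r+o(r)<d\leq\sup_{y\in B_{r}(x_{0})}\bigl(d_{\Omega}(y)-L|y-x_{0}|\bigr),
\]
the last inequality holding by choosing $y=x_{0}$. This violates the \textquotedblleft comparison with cones from above\textquotedblright\ characterization of infinity-subharmonicity (Crandall--Evans--Gariepy), so $d_{\Omega}$ (equivalently $w$) is not infinity-subharmonic at $x_{0}\in\Omega\setminus M_{\Omega}$, contradicting $w$ being a viscosity solution.

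The main obstacle is this \emph{only if} direction: verifying that at every ridge point outside $M_{\Omega}$ the distance function fails to be infinity-subharmonic. The cone-comparison argument captures the geometric obstruction cleanly, namely that a genuine angle between competing nearest-point directions forces the one-sided directional growth of $d_{\Omega}$ to be strictly less than its Lipschitz constant. If one prefers to avoid quoting the Crandall--Evans--Gariepy characterization, an equivalent route is to exhibit an explicit upper test function, e.g.\ $\phi(x)=d+p\cdot(x-x_{0})-\tfrac{M}{2}\bigl(\hat{n}\cdot(x-x_{0})\bigr)^{2}$ with $p=a_{1}e_{1}+a_{2}e_{2}$ (any $a_{1}+a_{2}=1$, $a_{1}\neq a_{2}$), $\hat{n}=(e_{1}-e_{2})/|e_{1}-e_{2}|$ and $M$ large, for which one checks directly that $\phi\geq d_{\Omega}$ in a sufficiently small ball around $x_{0}$ while $\Delta_{\infty}\phi(x_{0})=-M(p\cdot\hat{n})^{2}<0$.
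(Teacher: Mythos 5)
Your \emph{if} direction and your overall reduction (Jensen's uniqueness turns the lemma into ``$d_{\Omega}/\|d_{\Omega}\|_{\infty}$ is infinity harmonic in $\Omega\setminus M_{\Omega}$ iff $\Sigma_{\Omega}=M_{\Omega}$'') are fine and match the paper, which simply cites \cite{Ar}, \cite{Crand} for $-\Delta_{\infty}d_{\Omega}=0$ in $\Omega\setminus\Sigma_{\Omega}$. The genuine gap is in your \emph{only if} direction. The displayed inequality does \emph{not} violate comparison with cones from above: in the Crandall--Evans--Gariepy characterization the cone's vertex must either lie outside $V$ or be included in the comparison set $\partial(V\setminus\{z\})$, and you have taken $z=x_{0}\in V$ while checking the comparison only on $\partial B_{r}(x_{0})$. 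Concretely, your chain $\sup_{\partial B_{r}}(u-L|\cdot-x_{0}|)<u(x_{0})\leq\sup_{B_{r}}(u-L|\cdot-x_{0}|)$ is satisfied by every \emph{constant} function $u$ (with any $L>0$), and constants are infinity harmonic; so this inequality cannot certify failure of subharmonicity. What your expansion actually proves is only that the outward slope $L^{+}(x_{0}):=\lim_{r\to0}r^{-1}\bigl(\max_{\partial B_{r}(x_{0})}d_{\Omega}-d_{\Omega}(x_{0})\bigr)\leq\cos\theta<1$. To reach a contradiction you must pair this with the inward slope $L^{-}(x_{0})=1$ (exact, since $d_{\Omega}$ decreases at unit rate toward either nearest boundary point) and invoke the Crandall--Evans--Gariepy identity $L^{+}=L^{-}$ valid at every point for infinity harmonic functions --- or, as the paper does, simply cite the everywhere differentiability of infinity harmonic functions \cite{Ev}, which immediately forces $\Sigma_{\Omega}\subseteq M_{\Omega}$ and hence equality.

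Your fallback explicit test function also has a flaw as written: with a curved boundary one only has $d_{\Omega}(x)\leq d+\min_{i}(e_{i}\cdot(x-x_{0}))+C|x-x_{0}|^{2}$ with a \emph{positive} second-order error, so in directions $v\perp\hat{n}$ (where $e_{1}\cdot v=e_{2}\cdot v=p\cdot v$) your $\phi$ lies \emph{below} $d_{\Omega}$ to second order and does not touch from above. This is repairable: take $\phi(x)=d+p\cdot(x-x_{0})-\tfrac{M}{2}\bigl(\hat{n}\cdot(x-x_{0})\bigr)^{2}+K|x-x_{0}|^{2}$ with $K>C$; then $\phi\geq d_{\Omega}$ near $x_{0}$ because $p\cdot v-\min_{i}(e_{i}\cdot v)\geq\tfrac{1}{2}(1-|a_{1}-a_{2}|)\,|e_{1}-e_{2}|\,|\hat{n}\cdot v|$, while $\Delta_{\infty}\phi(x_{0})=2K|p|^{2}-M(p\cdot\hat{n})^{2}<0$ for $M$ large. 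With that correction the contrapositive goes through; without it, or without the $L^{+}=L^{-}$ (equivalently, differentiability) input, the \emph{only if} direction is not proved.
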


\begin{proof}
It is well known (see \cite{Ar}, \cite[Corollary 3.4]{Crand}) that
$-\Delta_{\infty}d_{\Omega}=0$ in $\Omega\setminus\Sigma_{\Omega}$ in the
viscosity sense. Hence, if $\Sigma_{\Omega}=M_{\Omega}$ then $d_{\Omega
}/\left\Vert d_{\Omega}\right\Vert _{\infty}$ is the only viscosity solution
to (\ref{umo}).

Inversely, if $d_{\Omega}/\left\Vert d_{\Omega}\right\Vert _{\infty}$ is the
viscosity solution to (\ref{umo}), then it is differentiable in $\Omega
\setminus M_{\Omega}$ as it is infinity harmonic on this set (see
\cite[Theorem 3.2]{Ev}). It follows that $\Sigma_{\Omega}=M_{\Omega}.$
\end{proof}

In order to simplify the notation in the proofs, in the sequel we will denote
$u_{p_{n},q(p_{n})}$ by $u_{n}$ and $\lambda_{p_{n},q(p_{n})}$ by $\lambda
_{n},$ whenever $p_{n}\rightarrow\infty.$

\section{Proofs \label{sec3}}

\subsection{Proof of Theorem \ref{main1}}

\begin{proof}
As
\[
\Lambda_{p}\leq\frac{\left\Vert \nabla u_{p,q(p)}\right\Vert _{p}}{\left\Vert
u_{p,q(p)}\right\Vert _{\infty}}\text{ \ and \ }1=\left\Vert u_{p,q(p)}%
\right\Vert _{q(p)}\leq\left\Vert u_{p,q(p)}\right\Vert _{\infty}\left\vert
\Omega\right\vert ^{1/q(p)}%
\]
we have that%
\begin{equation}
\lambda_{p,q(p)}^{1/p}=\left\Vert \nabla u_{p,q(p)}\right\Vert _{p}\geq
\Lambda_{p}\left\Vert u_{p,q(p)}\right\Vert _{\infty}\geq\Lambda_{p}\left\vert
\Omega\right\vert ^{-1/q(p)}. \label{b}%
\end{equation}

Hence, combining (\ref{a}) with (\ref{b}) we get, on the one hand,
\begin{equation}
\liminf_{p\rightarrow\infty}\lambda_{p,q(p)}^{1/p}\geq\lim_{p\rightarrow
\infty}\Lambda_{p}\lim_{p\rightarrow\infty}\left\vert \Omega\right\vert
^{-1/q(p)}=\Lambda_{\infty}. \label{d}%
\end{equation}

On the other hand, as
\[
\lambda_{p,q(p)}^{1/p}\leq\frac{\left\Vert \nabla d_{\Omega}\right\Vert _{p}%
}{\left\Vert d_{\Omega}\right\Vert _{q(p)}}=\frac{\left\vert \Omega\right\vert
^{1/p}}{\left\Vert d_{\Omega}\right\Vert _{q(p)}}%
\]
we obtain%
\begin{equation}
\limsup_{p\rightarrow\infty}\lambda_{p,q(p)}^{1/p}\leq\lim_{p\rightarrow
\infty}\frac{\left\vert \Omega\right\vert ^{1/p}}{\left\Vert d_{\Omega
}\right\Vert _{q(p)}}=\frac{1}{\left\Vert d_{\Omega}\right\Vert _{\infty}%
}=\Lambda_{\infty}. \label{c}%
\end{equation}

Gathering (\ref{d}) and (\ref{c}) we conclude the proof of (\ref{lim}). Hence,
returning to (\ref{b}) and taking (\ref{a}) into account again, we get
(\ref{lim1}).

Now, let $p_{n}\rightarrow\infty.$ As $\left\Vert \nabla u_{n}\right\Vert
_{p_{n}}=\lambda_{n}^{1/p_{n}}\rightarrow\Lambda_{\infty}$ it follows from
(\ref{c}) and Lemma \ref{F1} that, up to a subsequence, $u_{n}\rightarrow
u_{\infty}\in C_{0}(\overline{\Omega})\cap W^{1,\infty}(\Omega),$ uniformly in
$\overline{\Omega},$ with
\[
0\leq u_{\infty}(x)\leq\left\Vert \nabla u_{\infty}\right\Vert _{\infty
}d_{\Omega}(x)\leq\Lambda_{\infty}d_{\Omega}(x)\text{ \ for all }x\in
\overline{\Omega}.
\]

The uniform convergence and the limit in (\ref{lim1}) imply that $\left\Vert
u_{\infty}\right\Vert _{\infty}=1.$ Therefore,
\[
\left\Vert \nabla u_{\infty}\right\Vert _{\infty}\leq\Lambda_{\infty}\leq
\frac{\left\Vert \nabla u_{\infty}\right\Vert _{\infty}}{\left\Vert u_{\infty
}\right\Vert _{\infty}}=\left\Vert \nabla u_{\infty}\right\Vert _{\infty},
\]
so that $\left\Vert \nabla u_{\infty}\right\Vert _{\infty}=\Lambda_{\infty},$
completing thus the proof of items 1 and 2 of Theorem \ref{main1}.

The proof of item 3 is direct: if $x_{0}\in M,$ then \
\[
1=u_{\infty}(x_{0})\leq\Lambda_{\infty}d_{\Omega}(x_{0})=\frac{d_{\Omega
}(x_{0})}{\left\Vert d_{\Omega}\right\Vert _{\infty}}\leq1,
\]
so that $x_{0}\in M_{\Omega}.$

In order to prove item 4, let us first verify that $-\Delta_{\infty}u_{\infty
}\geq0$ in $\Omega.$ Let $x_{0}\in\Omega$ and fix a ball $B(x_{0}%
)\subset\Omega$ and a function $\phi\in C^{2}(B(x_{0}))$ such that
\[
\phi(x)-u_{\infty}(x)<0=\phi(x_{0})-u_{\infty}(x_{0})\text{ \ for all }x\in
B(x_{0})\setminus\left\{  x_{0}\right\}  .
\]

If $\left\vert \nabla\phi(x_{0})\right\vert =0,$ then we have trivially that
$-\Delta_{\infty}\phi(x_{0})=0.$ Thus, we can assume that $\left\vert
\nabla\phi(x_{0})\right\vert >0.$

As $u_{n}\rightarrow u_{\infty}$ uniformly, there exists $x_{n}\in\Omega$ such
that $x_{n}\rightarrow x_{0}$ and%
\[
\phi(x)-u_{n}(x)<0=\phi(x_{n})-u_{n}(x_{n})\text{ \ for all }\,x\in
B(x_{n})\setminus\{x_{n}\}.
\]
(See \cite[Lemma 4.5]{Li15}). We can also assume that $\left\vert \nabla
\phi(x_{n})\right\vert >0.$

Owing to Lemma \ref{pvis},%
\[
-\Delta_{p_{n}}\phi(x_{n})\geq\lambda_{n}\phi(x_{n})^{q_{n}-1}=\lambda
_{n}u_{n}(x_{n})^{q_{n}-1}\geq0,
\]
that is,%
\[
-(p_{n}-2)\left\vert \nabla\phi(x_{n})\right\vert ^{p_{n}-4}\left\{  \frac
{1}{p_{n}-2}\left\vert \nabla\phi(x_{n})\right\vert ^{2}\Delta\phi
(x_{n})+\Delta_{\infty}\phi(x_{n})\right\}  \geq0.
\]

It follows that%
\[
-\Delta_{\infty}\phi(x_{n})\geq\frac{1}{p_{n}-2}\left\vert \nabla\phi
(x_{n})\right\vert ^{2}\Delta\phi(x_{n}),
\]
and hence, after letting $n\rightarrow\infty,$ we obtain%
\[
-\Delta_{\infty}\phi(x_{0})\geq\left\vert \nabla\phi(x_{0})\right\vert
^{2}\Delta\phi(x_{0})\lim_{n\rightarrow\infty}\frac{1}{p_{n}-2}=0.
\]

This shows that $-\Delta_{\infty}u_{\infty}\geq0$ in $\Omega$ and in addition
proves that $u_{\infty}>0$ in $\Omega$ according to Lemma \ref{+}.
\end{proof}

\subsection{Proof of Theorem \ref{main2}}

\begin{proof}
According to item 4 of Theorem \ref{main1}, $-\Delta_{\infty}u_{\infty}\geq0$
in $\Omega\setminus M.$ Thus, as $u_{\infty}=0$ on $\partial\Omega$ and
$u_{\infty}=1$ on $M,$ it remains to prove that $-\Delta_{\infty}u_{\infty
}\leq0$ in $\Omega\setminus M.$ At this point we use (\ref{hyper}), a stronger
hypothesis than (\ref{qinf}).

We recall that
\[
M:=\left\{  x\in\Omega:u_{\infty}(x)=1\right\}  \subseteq M_{\Omega}:=\left\{
x\in\Omega:d_{\Omega}(x)=\left\Vert d_{\Omega}\right\Vert _{\infty}\right\}
,
\]
so that $\Omega\setminus M$ is an open set.

Let us fix $x_{0}\in\Omega\setminus M,$ a ball $B(x_{0})\subset\Omega\setminus
M,$ and a function $\phi\in C^{2}(B(x_{0}))$ such that
\[
\phi(x)-u_{\infty}(x)>0=\phi(x_{0})-u_{\infty}(x_{0})\text{ \ for all }x\in
B(x_{0})\setminus\left\{  x_{0}\right\}  .
\]

If $\left\vert \nabla\phi(x_{0})\right\vert =0$ then we obtain directly that
$-\Delta_{\infty}\phi(x_{0})=0.$ Thus, we assume that $\left\vert \nabla
\phi(x_{0})\right\vert >0.$

Let $p_{n}\rightarrow\infty$ be such that $u_{n}\rightarrow u_{\infty}$
uniformly. We recall from (\ref{lim}) and (\ref{hyper})\ that%
\begin{equation}
\lim_{n\rightarrow\infty}\lambda_{n}^{1/p_{n}}=\Lambda_{\infty}\text{ \ and
\ }\lim_{n\rightarrow\infty}\frac{q_{n}}{p_{n}}=\infty. \label{A}%
\end{equation}

The uniform convergence $u_{n}\rightarrow u_{\infty}$ guarantees the existence
of $x_{n}\in\Omega\setminus M$ such that $x_{n}\rightarrow x_{0}$ and%
\[
\phi(x)-u_{n}(x)>0=\phi(x_{n})-u_{n}(x_{n})\text{ \ for all }\,x\in
B(x_{n})\setminus\{x_{n}\}.
\]

As $\left\vert \nabla\phi(x_{0})\right\vert >0$ and $0<u_{\infty}(x_{0})<1$ we
can assume that $\left\vert \nabla\phi(x_{n})\right\vert >0$ and%
\begin{equation}
0<a\leq u_{n}(x_{n})\leq b<1\text{ \ for all }n\in\mathbb{N}, \label{B}%
\end{equation}
where the constants $a$ and $b$ are uniform with respect to $n.$

Using again Lemma \ref{pvis} we have%
\begin{align*}
-(p_{n}-2)\left\vert \nabla\phi(x_{n})\right\vert ^{p_{n}-4}\left\{
\frac{\left\vert \nabla\phi(x_{n})\right\vert ^{2}}{p_{n}-2}\Delta\phi
(x_{n})+\Delta_{\infty}\phi(x_{n})\right\}   &  \leq\lambda_{n}\phi
(x_{n})^{q_{n}-1}\\
&  =\lambda_{n}u_{n}(x_{n})^{q_{n}-1}.
\end{align*}
Hence, after rearranging terms we obtain%
\begin{equation}
-\frac{\left\vert \nabla\phi(x_{n})\right\vert ^{2}}{p_{n}-2}\Delta\phi
(x_{n})-\Delta_{\infty}\phi(x_{n})\leq\frac{1}{p_{n}-2}\left[  \frac
{\lambda_{n}^{\frac{1}{p_{n}-4}}u_{n}(x_{n})^{\frac{q_{n}-1}{p_{n}-4}}%
}{\left\vert \nabla\phi(x_{n})\right\vert }\right]  ^{p_{n}-4}. \label{C}%
\end{equation}

Combining (\ref{A}) and (\ref{B}) we have%
\[
\lim_{n\rightarrow\infty}\frac{\lambda_{n}^{\frac{1}{p_{n}-4}}u_{n}%
(x_{n})^{\frac{q_{n}-1}{p_{n}-4}}}{\left\vert \nabla\phi(x_{n})\right\vert
}=\frac{\Lambda_{\infty}}{\left\vert \nabla\phi(x_{0})\right\vert }%
\lim_{n\rightarrow\infty}u_{n}(x_{n})^{\frac{q_{n}-1}{p_{n}-4}}=0
\]
so that%
\[
\lim_{n\rightarrow\infty}\frac{1}{p_{n}-2}\left[  \frac{\lambda_{n}^{\frac
{1}{p_{n}-4}}u_{n}(x_{n})^{\frac{q_{n}-1}{p_{n}-4}}}{\left\vert \nabla
\phi(x_{n})\right\vert }\right]  ^{p_{n}-4}=0.
\]

As
\[
\lim_{n\rightarrow\infty}\left\{  -\frac{1}{p_{n}-2}\left\vert \nabla
\phi(x_{n})\right\vert ^{2}\Delta\phi(x_{n})-\Delta_{\infty}\phi
(x_{n})\right\}  =-\Delta_{\infty}\phi(x_{0}),
\]
we conclude, after letting $n\rightarrow\infty$ in (\ref{C}), that%
\[
-\Delta_{\infty}\phi(x_{0})\leq0.
\]
This shows that $u_{\infty}$ is subharmonic in $\Omega\setminus M$ and
finishes the proof that $u_{\infty}$ is harmonic in $\Omega\setminus M.$

The first equality in (\ref{MMS}) implies that $u_{\infty}$ is a viscosity
solution to (\ref{umo}) and by Lemma \ref{lm2} the second equality
in\ (\ref{MMS}) implies that $d_{\Omega}/\left\Vert d_{\Omega}\right\Vert
_{\infty}$ is also a viscosity solution to the same problem. Thus, by
uniqueness $u_{\infty}=d_{\Omega}/\left\Vert d_{\Omega}\right\Vert _{\infty}.$

Inversely, if $u_{\infty}=d_{\Omega}/\left\Vert d_{\Omega}\right\Vert
_{\infty}$ in $\Omega\setminus M,$ then $d_{\Omega}$ is harmonic and,
therefore, differentiable in $\Omega\setminus M.$ Consequently, $M=\Sigma
_{\Omega}$ and this implies that $\Sigma_{\Omega}=M_{\Omega}$ (as $M\subseteq
M_{\Omega}\subseteq\Sigma_{\Omega}$).
\end{proof}

\subsection{Proof of Proposition \ref{main3}}

\begin{proof}
The simplest case in which the function $q(p)$ is constant, say $q(p)\equiv
s\in\lbrack1,\infty),$ has already been proved in \cite[Theorem 4.2]{EPM22}.
According to that result
\begin{equation}
\lim_{p\rightarrow\infty}\lambda_{p,s}^{1/p}=\left\Vert d_{\Omega}\right\Vert
_{s}^{-1} \label{s1}%
\end{equation}
and%
\begin{equation}
\lim_{p\rightarrow\infty}u_{p,s}=\frac{d_{\Omega}}{\left\Vert d_{\Omega
}\right\Vert _{s}}\text{ \ uniformly in }\overline{\Omega}. \label{s2}%
\end{equation}

In the sequel we combine (\ref{s1})-(\ref{s2}) with the following fact: the
function $q\mapsto\left\vert \Omega\right\vert ^{\frac{p}{q}}\lambda_{p,q}$ is
strictly decreasing, for each fixed $p\in\lbrack1,\infty).$ This monotonicity
result was proved for $1<p<N$ in \cite[Proposition 2]{E13}, but the proof
given there also works for $p>N$ (see also \cite{An}).

Let $\epsilon>0$ be arbitrarily fixed. Owing to (\ref{r}) we have that
$q(p)<r+\epsilon$ for all $p$ sufficiently large, say $p>p_{0}.$ Hence,
according to the above mentioned monotonicity result we have%
\[
\lambda_{p,r+\epsilon}\left\vert \Omega\right\vert ^{\frac{p}{r+\epsilon
}-\frac{p}{q(p)}}\leq\lambda_{p,q(p)}\text{ \ for all }p>p_{0},
\]
so that%
\[
\frac{\left\vert \Omega\right\vert ^{\frac{1}{r+\epsilon}-\frac{1}{r}}%
}{\left\Vert d_{\Omega}\right\Vert _{r+\epsilon}}\leq\liminf_{p\rightarrow
\infty}\lambda_{p,q(p)}^{1/p}.
\]

Letting $\epsilon\rightarrow0,$ we obtain on the one hand%
\begin{equation}
\left\Vert d_{\Omega}\right\Vert _{r}^{-1}\leq\liminf_{p\rightarrow\infty
}\lambda_{p,q(p)}^{1/p}. \label{s3}%
\end{equation}

On the other hand, the inequality%
\[
\lambda_{p,q(p)}^{1/p}\leq\frac{\left\Vert \nabla d_{\Omega}\right\Vert _{p}%
}{\left\Vert d_{\Omega}\right\Vert _{q(p)}}=\frac{\left\vert \Omega\right\vert
^{1/p}}{\left\Vert d_{\Omega}\right\Vert _{q(p)}}%
\]
lead us to the estimate
\begin{equation}
\limsup_{p\rightarrow\infty}\lambda_{p,q(p)}^{1/p}\leq\left\Vert d_{\Omega
}\right\Vert _{r}^{-1}. \label{s0}%
\end{equation}

Combining (\ref{s3}) and (\ref{s0}) we conclude that%
\[
\lim_{p\rightarrow\infty}\lambda_{p,q(p)}^{1/p}=\left\Vert d_{\Omega
}\right\Vert _{r}^{-1}.
\]

Now, let $p_{n}\rightarrow\infty.$ As $\lambda_{p,q(p)}^{1/p}=\left\Vert
\nabla u_{p,q(p)}\right\Vert _{p},$ Lemma \ref{F1} and (\ref{s0}) imply that,
up to a subsequence, $u_{p_{n},q(p_{n})}$ converges uniformly in
$\overline{\Omega}$ to a function $u_{\infty}\in C_{0}(\overline{\Omega})\cap
W^{1,\infty}(\Omega)$ satisfying%
\begin{equation}
0\leq u_{\infty}\leq\frac{d_{\Omega}}{\left\Vert d_{\Omega}\right\Vert _{r}%
}\text{ in }\overline{\Omega}. \label{s4}%
\end{equation}

The uniform convergence and (\ref{r}) yield%
\[
\lim_{n\rightarrow\infty}\left\Vert u_{p_{n},q(p_{n})}\right\Vert _{r}%
=\lim_{n\rightarrow\infty}\left\Vert u_{p_{n},q(p_{n})}\right\Vert _{q(p_{n}%
)}=\left\Vert u_{\infty}\right\Vert _{r}.
\]
Hence, recalling that $\left\Vert u_{p_{n},q(p_{n})}\right\Vert _{q(p_{n}%
)}=1,$ we have that $\left\Vert u_{\infty}\right\Vert _{r}=1,$ which combined
with (\ref{s4}) yields $u_{\infty}\equiv\frac{d_{\Omega}}{\left\Vert
d_{\Omega}\right\Vert _{r}}.$

As the limit function $u_{\infty}$ is always the same, we conclude that
\[
\lim_{p\rightarrow\infty}u_{p,q(p)}=\frac{d_{\Omega}}{\left\Vert d_{\Omega
}\right\Vert _{r}}\text{ \ uniformly in }\overline{\Omega}.
\]

\end{proof}

\section*{Acknowledgements}

The author was partially supported by Fapemig/Brazil (RED-00133-21) and
CNPq/Brazil (305578/2020-0).

\end{document}